\allowdisplaybreaks \numberwithin{equation}{section}
\newtheorem{theorem}{Theorem}[section]
\newtheorem{proposition}[theorem]{Proposition}
\newtheorem{lemma}[theorem]{Lemma}
\newtheorem{thm}{Theorem}[section]
\newtheorem{lem}[thm]{Lemma}
\newtheorem{rem}[thm]{Remark}
\theoremstyle{definition}
\newtheorem{remark}[theorem]{Remark}
\newcommand{\R}{\mathbb{R}}
\newcommand{\ds}{\displaystyle}
\def \O{\Omega}
\def \L{{\mathcal L}}
\def \e{\varepsilon}
\def \A{{\mathcal A}}
\begin{document}
\title[simultaneous synchronized and segregated solutions]
{Infinitely many solutions with simultaneous synchronized and segregated components for nonlinear
Schr\"{o}dinger systems
 }
 \author{ Qingfang Wang and Dong Ye}

\address{School of Mathematics and Computer Science, Wuhan Polytechnic University, Wuhan 430079, P.R. China }
\email{wangqingfang@whpu.edu.cn}

\address{School of Mathematical Science, East China Normal University, Shanghai 200241, P.R. China}
\address{IECL, UMR 7502, University of Lorraine, 57050 Metz, France}
\email{dye@math.ecnu.edu.cn}
\thanks{The research was supported by NSFC (No.~12126356,12126324). D.Y.~is also supported by Science and Technology Commission of Shanghai Municipality (No.~22DZ2229014).}
\date{\today}
\begin{abstract}

In this paper, we consider the following nonlinear Schr\"odinger system in $\R^3$:
\begin{align*}
 -\Delta u_j +P_j(x) u=\mu_j u_j^3+\sum\limits_{i=1,i\neq j}^N\beta_{ij}u_i^2u_j,
\end{align*}
where $N\geq3$, $P_j$ are nonnegative radial potentials, $\mu_j>0$ and $\beta_{ij}=\beta_{ji}$ are coupling constants.
This type of systems have been widely studied in the last decade, many purely synchronized or segregated solutions are constructed, but few considerations for simultaneous synchronized and segregated positive solutions exist.
Using Lyapunov-Schmidt reduction method, we construct new type of solutions with {\bf simultaneous synchronization and segregation}. Comparing to known results in the literature, the novelties are threefold. We prove the existence of infinitely many non-radial positive and also {\sl sign-changing} vector solutions,
where some components are synchronized but segregated with other components; the energy level can be {\sl arbitrarily large}; and our approach works for {\sl any $N \geq 3$}.

\medskip\noindent
{\bf Keywords:} Schr\"odinger systems, simultaneous segregation and synchronization, Lyapunov-Schmidt reduction.
\end{abstract}
\maketitle
\section{Introduction}
In this paper, we consider the following nonlinear Schr\"odinger systems:
\begin{align}\label{eq1}
 -\Delta u_j +P_j(x) u=\mu_j u_j^3+\sum\limits_{i=1,i\neq j}^N\beta_{ij}u_i^2u_j \ \mbox{ in }\;\R^3,
\end{align}
where $N\geq3$, $P_j$ are nonnegative radial potentials, $\mu_j>0$, and $\beta_{ij}=\beta_{ji}$ are coupling constants.
They arise in the study of standing waves for $N$-coupled Schr\"{o}dinger systems: $ \Phi_j=\Phi_j(x,t)\in {\mathbb C}$,
 \begin{eqnarray*}
 -i\frac{\partial}{\partial t}\Phi_j=\Delta\Phi_j-P_j\Phi_j+\mu_j|\Phi_j|^2\Phi_j+\Phi_j\sum\limits_{l=1,i\neq j}^N\beta_{ij}|\Phi_i|^2 \ \mbox{ in }\;\R^3.
\end{eqnarray*}
These type of systems, also called Gross-Pitaevskii equations, find applications in many physics problems such as
nonlinear optics and multi-species Bose-Einstein condensates (see \cite{CLLL,MS} and references therein).
For example, the system with $N=2$ arises in the Hartree-Fock theory for a double condensate,
that is, a binary mixture of a Bose-Einstein condensate in two different hyperfine states (see \cite{EGBB,E}).
More precisely, $\Phi_i$ are the wave functions of the corresponding condensates,
$\mu_j $ and $\beta_{ij} \ (j\neq i)$ are respectively the intraspecies and interspecies scattering lengths.
The signs of scattering lengths $\beta_{ij}$ determine whether the interactions of state components are repulsive or attractive.
In the attractive case, the components of a vector solution tend to go along with each other, leading to synchronization.
In the repulsive case, the components tend to segregate from each other, leading to phase separations.
These phenomena have been documented in experiments as well as in numeric simulations (see \cite{MS}).

\medskip
For the understanding of \eqref{eq1}, let us begin with the single equation, i.e~$N = 1$. It's well known \cite{Kwong} that for any $1<p<2^*$ (with $2^*$ the Sobolev critical exponent), the equation
\begin{eqnarray}\begin{cases}\label{eqs1.2}
 -\Delta w + w= w^p ,\ w>0  \mbox{ in }\;\R^d,\cr
 w(0)=\max\limits_{\R^N}w(x),  \ w \in H^1(\R^d).
 \end{cases}
\end{eqnarray}
has a unique solution denoted by $W^*$ (we omit the index $p, d$ for simplicity), hence $W^*$ is radial, and there exists $C_{d, p} >0$ such that
\begin{align}\label{wy1}
W^*(|x|) = C_{d, p}(1+O(|x|^{-1}))|x|^{\frac{1-d}{2}}e^{-|x|} \quad\hbox{as}~ |x|\rightarrow\infty.
\end{align}
If we replace the term $w$ by $Vw$ with a nonnegative potential $V$, the situation is drastically changed. In the pioneer work \cite{WY}, Wei-Yan constructed infinitely many nonradial positive solutions of the nonlinear Schr\"odinger equation
\begin{align*}
 -\Delta w + V(x)w=w^{p} \ \mbox{ in }\;\R^d, \quad w\in H^1(\R^d).
 \end{align*}
Here $1<p<2^*$, $V$ is nonnegative, continuous and radial satisfying
$$V(|x|)=1+\frac{a}{|x|^m}+O\Big(\frac{1}{|x|^{m+\sigma}}\Big), \quad \mbox{as }\; |x|\rightarrow+\infty,$$
with $a>0$, $m>1$ and $\sigma>0$.

\medskip
Later on, the two components system \eqref{eq1}, i.e.~$N = 2$ has been studied extensively in the literature. Let $\mu_1, \mu_2 > 0$, consider the following system in $\R^3$:
\begin{eqnarray}\begin{cases}\label{eqs1.3}
 -\Delta u + u=\mu_1 u^3+\beta_{12}v^2u,\cr
 -\Delta v + v=\mu_2 v^3+\beta_{12}u^2v.
 \end{cases}
 \end{eqnarray}
It's easy to see that there exist radial solutions of \eqref{eqs1.3} as follows:
\begin{align}
\label{uv}
 (U,V)=(\alpha W^*,\gamma W^*)
\end{align}
provided $-\sqrt{\mu_1\mu_2}<\beta_{12}<\min\{\mu_1,\mu_2\}$ or $\beta_{12}>\max\{\mu_1,\mu_2\}$, and
\begin{align}
\label{ag}
 \alpha=\sqrt{\frac{\mu_2-\beta_{12}}{\mu_1\mu_2-\beta_{12}^2}} ,\ \ \gamma=\sqrt{\frac{\mu_1-\beta_{12}}{\mu_1\mu_2-\beta_{12}^2}}.
\end{align}
If radial potentials $P_j$ are involved, Peng-Wang proved that under suitable conditions on $P_j$, the Schr\"odinger system
\begin{eqnarray}\begin{cases}\label{eqs1.3bis}
 -\Delta u + P_1u=\mu_1 u^3+\beta_{12}v^2u,\cr
 -\Delta v + P_2v=\mu_2 v^3+\beta_{12}u^2v,
 \end{cases}
 \end{eqnarray}
has infinitely many non-radial positive solutions of segregated type or synchronized type in $\R^3$. A key ingredient of their study is the nondegeneracy of solutions $(U, V)$ given by \eqref{uv}. More precisely, they showed that there exists a sequence $(\gamma_i) \subset(-\sqrt{\mu_1\mu_2},0)$ satisfying $\lim_{i\to\infty}\gamma_i= -\sqrt{\mu_1\mu_2}$ such that for any
\begin{align}
\label{lambda}
\beta_{12} \in \Lambda := \Big[(-\sqrt{\mu_1\mu_2},0)\backslash\{\gamma_i\}\Big] \cup(0,\min\{\mu_1,\mu_2\})\cup(\max\{\mu_1,\mu_2\},\infty),
\end{align}
$(U,V)$ is non-degenerate for the system \eqref{eqs1.3}, in the sense that the kernel in $H^1(\R^3)^2$ of the linearized system to \eqref{eqs1.3} at $(U, V)$ is given by
$${\rm Span}\Big\{\Big(\frac{\alpha}{\gamma}\frac{\partial W^*}{\partial x_j},\frac{\partial W^*}{\partial x_j}\Big),\; j=1,2,3 \Big\}$$ with $(\alpha, \gamma)$ in \eqref{ag}.
Later on, many works are realized for the two component systems, to name a few, we refer the readers to \cite{AC,BWW,EC,DWW,LW1,LW2,LP,NTTV,WZ,WW,WW1} and the references therein.

\medskip
For the constant potential case, Lin-Wei \cite{LW} established some general results for the existence issue of ground state solutions of \eqref{eq1}. In particular,
for $N=3$, they showed that the solutions are clearly of segregated type as each of the three components has a bump
moving away from each other if all $\beta_{ij}$ are negative; or one of the $\beta_{ij}$ is negative and the coefficient matrix $(\beta_{ij})$ is definitely positive. Recently,
Wei-Wu \cite{WW2} give a systematic and almost complete study on the existence of ground states to \eqref{eq1} with constant potentials and mixed couplings.
For more general potentials, Li-Wei-Wu obtain in \cite{LWW} almost optimal existence results of infinitely many non-radial
positive solutions of \eqref{eq1}, in particular, they extend the results in \cite{PV,PW} under some mild assumptions on the potentials $P_j$.

\medskip
We can remark that almost all the works in the literature showed the existence of solutions to \eqref{eq1} with purely synchronized or segregated components. To our best knowledge, the only example of simultaneous synchronized and segregated components to \eqref{eq1} was given in \cite{PWW} with $3\leq N\leq6$. For example, Peng-Wang-Wang considered the three components system where the third component has a bump at the origin, while two other components placed synchronized peaks on the vertices
of a scaled regular polygon far away from the origin, which
shows the existence of mixed phenomenon for segregation and synchronization.

\medskip
Our aim here is to show the existence of new type solutions with simultaneous synchronization and segregation for \eqref{eq1}. The novelties are {\bf threefold}. As mentioned, for $3 \leq N\leq 6$, the existence of simultaneous synchronized and segregated positive solutions with fixed peak number was showed. Here the number of spikes can be arbitrarily large; moreover we construct sign-changing solutions; and our approach works for general $N \geq 3$.


\medskip
To fix the idea, we assume first $N = 3$, see Remark \ref{remd} for more general cases. Under suitable conditions, we will construct
infinitely many non-radial positive or sign-changing solutions where two first components are synchronized but segregated from the third one. 

\medskip
The following are some technical assumptions on the potentials $(P_j)$:
\begin{enumerate}
\renewcommand{\labelenumi}{$\bullet$}
\item $P_j$ are bounded, nonnegative, radial, and there are constants $a_j\in\R,\ m_j>1$, $\sigma>0$ such that as $r\rightarrow+\infty$,
$$P_j (r)=1+\frac{a_j}{r^{m_j}}+O\Big(\frac{1}{r^{{m_j}+\sigma}}\Big), \;\; j =1,2,3. \eqno{(P)}$$
\item We say that $(P_j)$ satisfies $(H_m)$, if one of the following conditions holds true.
\begin{itemize}
\item[$(i)$] $m_3=\min\{m_1,m_2\}$, $a_3>0$; $m_1< m_2, a_1>0$, or $m_1>m_2, a_2>0$.
\item[$(ii)$] $m_1=m_2=m_3,a_1\alpha^2+a_2\gamma^2>0$ with $(\alpha, \gamma)$ given by \eqref{ag}, $a_3>0$.
\end{itemize}
\item We say that $(P_j)$ satisfies $(\widetilde{H}_m)$ if one of the following conditions holds true.
\begin{itemize}
\item[$(iii)$] $m_3=\min\{m_1,m_2\}$, $a_3<0$; $m_1< m_2,a_1<0$, or $m_1> m_2, a_2<0$.
\item[$(iv)$]$m_1=m_2=m_3$, $a_1\alpha^2+a_2\gamma^2<0$  with $(\alpha, \gamma)$ given by \eqref{ag}, $a_3<0$.
\end{itemize}
\end{enumerate}

\medskip
Our main results can be stated as follows.
\begin{theorem}\label{th1.1}
Assume that $(P)$ and $(H_m)$ are satisfied. Given any $\beta_{12} \in  \Lambda$ in \eqref{lambda}, there exists $\delta>0$ such that for any $|\beta_{13}|+|\beta_{23}|<\delta$, the system \eqref{eq1} has a sequence of non-radial positive solutions $(u_{j, \ell})$ where $u_{1, \ell}$ and $u_{2, \ell} $ are synchronized but segregated with $u_{3, \ell}$.

\smallskip
Moreover, the associated energy (see \eqref{2.1}) tends to infinity as $\ell\to \infty$;
$$\lim_{\ell\to \infty}\|u_{1, \ell}\|_\infty = \alpha\|W^*\|_\infty, \quad \lim_{\ell \to \infty} \|u_{2, \ell}\|_\infty=\gamma\|W^*\|_\infty,  \quad \lim_{\ell\to \infty} \|u_{3, \ell}\|_\infty= \|W^*\|_\infty;$$ and
\begin{align*}
\lim_{\ell\to \infty}\|\sqrt{|\mu_1-\beta_{12}|}u_{1, \ell}-\sqrt{|\mu_2-\beta_{12}|}u_{2, \ell}\|_{H^1\cap L^\infty(\R^3)} = 0.
\end{align*}
\end{theorem}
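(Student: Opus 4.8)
The plan is to apply the Lyapunov-Schmidt reduction, using as approximate solutions a superposition of scaled copies of the nondegenerate limit profile placed at the vertices of a scaled regular polygon. Precisely, for $\ell$ large, set $r_\ell \sim \frac{m_3+\sigma/2}{2\pi} \ell \log \ell$ (the balancing scale for the potential perturbation of order $r^{-m}$ against the exponentially small interaction between bumps) and place $\ell$ points $x^k_\ell = r_\ell\big(\cos\frac{2\pi k}{\ell}, \sin\frac{2\pi k}{\ell}, 0\big)$, $k = 1,\dots,\ell$, on the $(x_1,x_2)$-plane. The ansatz is
\begin{align*}
u_{1} = \alpha \sum_{k=1}^\ell W^*(\cdot - x^k_\ell), \quad u_{2} = \gamma \sum_{k=1}^\ell W^*(\cdot - x^k_\ell), \quad u_{3} = \sum_{k=1}^\ell W^*(\cdot - x^k_\ell) + \phi,
\end{align*}
or a variant in which one places the third component's bumps interlaced with the first two so that they are \emph{segregated} (centres at angles $\frac{2\pi k}{\ell}$ versus $\frac{(2k+1)\pi}{\ell}$, so that $u_3$'s peaks sit at mutual distance bounded below from the synchronized peaks); one works in the symmetric subspace of $H^1(\R^3)^3$ of functions invariant under the rotation by $\frac{2\pi}{\ell}$ about the $x_3$-axis and under the reflections $x_3 \mapsto -x_3$ and $x_2 \mapsto -x_2$, which reduces the kernel directions to a single parameter, namely $r_\ell$ itself.

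The key steps, in order: (1) Estimate the energy $I(\mathbf{U}_{r}) $ of the ansatz as a function of $r = r_\ell$, showing $I(\mathbf{U}_r) = \ell\big( A + \frac{B_1 a_1 \alpha^2 + B_1 a_2\gamma^2 + B_2 a_3}{r^{m}} - B_3 e^{-2r\sin(\pi/\ell)} + \text{l.o.t.}\big)$ for explicit positive constants, using the decay \eqref{wy1}; the sign conditions in $(H_m)$ are exactly what makes the coefficient of $r^{-m}$ positive, so the reduced functional has an interior maximum (or suitable critical point) in $r$ on an interval like $[\ell\log\ell/C, C\ell\log\ell]$. (2) Set up the reduction: estimate the error $\|\mathbf{E}\| = \|\mathcal{S}(\mathbf{U}_r)\|$ in the relevant weighted norm; prove the linearized operator $\mathbf{L}$ is uniformly invertible on the orthogonal complement of the approximate kernel, which is where the nondegeneracy of $(\alpha W^*, \gamma W^*)$ from Peng-Wang (valid since $\beta_{12}\in\Lambda$) together with the nondegeneracy of $W^*$ itself enters, plus a smallness argument absorbing the $\beta_{13}, \beta_{23}$ cross terms (this is why $\delta$ must be small — the cross-coupling must not destroy the block-diagonal nondegeneracy). (3) Solve the nonlinear auxiliary equation for $\phi = \phi(r)$ by contraction mapping, getting $\|\phi(r)\| = o(\text{error})$ with $C^1$ dependence on $r$. (4) Solve the reduced (bifurcation) equation in the single variable $r$: show it is equivalent to finding a critical point of $r \mapsto I(\mathbf{U}_r + \phi(r))$, which by step (1) and the smallness of $\phi$ has a critical point near the interior maximum of the leading-order reduced energy. (5) Verify the qualitative conclusions: positivity (from the ansatz sign and smallness of $\phi$, or for sign-changing variants, choosing alternating signs on the bumps), non-radiality and distinctness for different $\ell$ (the solutions have exactly $\ell$ or $2\ell$ peaks), the energy blowup $I \to \infty$ as $\ell\to\infty$ (clear from $I \sim \ell A$), the $L^\infty$ limits (the bumps are asymptotically disjoint so $\|u_{j,\ell}\|_\infty \to$ the single-bump value), and the synchronization estimate $\|\sqrt{|\mu_1-\beta_{12}|}u_{1,\ell} - \sqrt{|\mu_2-\beta_{12}|}u_{2,\ell}\|_{H^1\cap L^\infty} \to 0$, which follows because $\sqrt{|\mu_1-\beta_{12}|}\,\alpha = \sqrt{|\mu_2-\beta_{12}|}\,\gamma$ by \eqref{ag} so the leading terms cancel exactly and only $\phi$-order terms remain.

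The main obstacle I expect is step (2), specifically the uniform invertibility of the full linearized operator in the presence of three interacting families of bumps with two of them synchronized. The synchronized pair $(\alpha W^*, \gamma W^*)$ contributes a two-dimensional block whose nondegeneracy is only known for $\beta_{12}$ outside the exceptional sequence $\{\gamma_i\}$; one must check that the spectral gap survives (i) the superposition of $\ell$ far-apart bumps — standard but requires careful weighted-norm bookkeeping of the exponentially small overlaps — and (ii) the perturbation by the $\beta_{13}, \beta_{23}$ terms coupling to the third family, which is handled by choosing $\delta$ small but must be done uniformly in $\ell$. A secondary subtlety is ensuring the segregation is genuine: one needs the interaction between the $u_1,u_2$-peaks and the $u_3$-peaks to be of strictly smaller order than (or comparable to but not dominating) the self-interaction terms that drive the reduced equation, which constrains how the two polygons are interlaced and forces the $r^{-m}$ potential term rather than the peak-interaction term to be the decisive one in the balance — hence the hypotheses $(H_m)$ are on the potentials, not on the $\beta$'s. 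The rest — error estimates, contraction mapping, the reduced one-variable problem — is by now routine in this circle of ideas.
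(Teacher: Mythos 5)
Your overall strategy is the same as the paper's (polygonal multi-bump ansatz, nondegeneracy of $(\alpha W^*,\gamma W^*)$ for $\beta_{12}\in\Lambda$ plus nondegeneracy of $W^*$, smallness of $\beta_{13},\beta_{23}$ to preserve coercivity of the quadratic form, contraction mapping, then a reduced variational problem balancing the $r^{-m}$ potential term against the exponentially small bump interactions). However, there is a genuine gap in how you set up the configuration and the reduced problem. Your primary ansatz puts the peaks of $u_3$ at the \emph{same} points $x^k_\ell$ as those of $u_1,u_2$; that produces a fully synchronized candidate, not the segregated one the theorem asserts (and the limiting profile $(\alpha W^*,\gamma W^*, \mu_3^{-1/2}W^*)$ concentrated at one point does not solve the limit system once $\beta_{13},\beta_{23}\neq 0$, so segregation is not a cosmetic choice). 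Your fallback — interlacing the third family at angles shifted by $\pi/\ell$ — is the correct geometry, but then your claim that the symmetry ``reduces the kernel directions to a single parameter, namely $r_\ell$'' is wrong: within the symmetric class the approximate kernel is two-dimensional, spanned (approximately) by $\partial_r(U_r,V_r,0)$ and $\partial_\rho(0,0,W_\rho)$, because there is no symmetry exchanging the synchronized ring and the segregated ring (they carry different profiles). Consequently the reduction must carry \emph{two} independent radii: one imposes the two orthogonality conditions $\int W_{S^1}^{*2}(X_1\varphi+Y_1\psi)=0$ and $\int W_{T^1}^{*2}Z_1\xi=0$, inverts the linearized operator on that codimension-two subspace, and then solves a two-variable reduced problem $F(r,\rho)$ on a rectangle $D_\ell$, locating an interior maximum (the potential terms $a_1\alpha^2 r^{-m_1}+a_2\gamma^2 r^{-m_2}$ and $a_3\rho^{-m_3}$ are balanced \emph{separately} against the two attraction terms $e^{-2\pi r/\ell}\ell^2/r$ and $e^{-2\pi\rho/\ell}\ell^2/\rho$, which is exactly where both sign conditions in $(H_m)$ are used). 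With a single parameter $r$ and $\rho\equiv r$ forced, a critical point of the one-variable function only kills one linear combination of the two kernel projections; the projection onto the $Z$-direction survives and the reduced equation does not close. This is not a bookkeeping issue but a structural one, and fixing it essentially forces you into the paper's two-parameter scheme.

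Two smaller points: the third component's limit profile must be $W=\mu_3^{-1/2}W^*$ (it solves $-\Delta w+w=\mu_3w^3$), not $W^*$; and the concentration scale is $r,\rho\sim\frac{\min\{m_j\}}{2\pi}\,\ell\ln\ell$ (coming from balancing $r^{-m}$ against $e^{-2\pi r/\ell}$), not $\frac{m_3+\sigma/2}{2\pi}\ell\ln\ell$ — harmless if you genuinely work on a window $[(\frac{m}{2\pi}-\delta)\ell\ln\ell, M\ell\ln\ell]$ and rule out boundary extrema, but the ruling-out step (done in the paper by comparing the boundary values of $F$ with the interior lower bound) should be stated, since it is what turns the maximization into a critical point. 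Finally, in the interlaced configuration you should track the cross-ring interaction explicitly: with both radii of order $\ell\ln\ell$ it contributes terms of size $(|\beta_{13}|+|\beta_{23}|)\,e^{-2|\rho-re^{i\pi/\ell}|}\ell^2/r$, the same order as the same-ring attraction, and it is only the smallness of $\delta$ (not an extra power of $\ell$) that keeps it from spoiling the balance — your ``main obstacle'' paragraph gestures at this but the estimate must appear in both the error bound and the energy expansion.
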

Here $\|\cdot\|_{H^1\cap L^\infty} = \|\cdot\|_{H^1} + \|\cdot\|_\infty$. Next, we introduce some notations and formulate a version which gives more precise descriptions about the segregated and synchronized character of the constructed solutions. The functional space we use is
\begin{align}\label{eqs1.4}
\begin{split}
H_{V, s} &=\Big\{u:u\in H_V^1(\R^3), u \text{\ is\  even\  in} \ x_2, x_3;\cr
 & \quad\quad u(r\cos\theta,r\sin\theta,x_3)=u\Big(r\cos\big(\theta+\frac{2\pi}{\ell}\big),r\sin\big(\theta+\frac{2\pi}{\ell}\big),x_3\Big), \;\forall \;\theta \in \R\Big\}.
 \end{split}
\end{align}
Here and after, for any function $P(x)\geq 0$, $H_{P}^1(\R^3)$ means the weighted Sobolev space, endowed with the norm
$$
\|u\|_P=\Big(\int_{\R^3}|\nabla u|^2+P(x)u^2dx\Big)^{\frac{1}{2}},
$$
induced by the inner product
$$
\langle u,v\rangle_{P}=\int_{\R^3}\Big(\nabla u\nabla v +P(x)uv\Big)dx.
$$
\begin{rem}
By the fact that $P_j$ are bounded, nonnegative and satisfy $(P)$, it's clear that $H_{P_j}^1(\R^3) = H^1(\R^3)$, and $\|\cdot\|_{P_j}$ are equivalent to the standard $H^1(\R^3)$ norm.
\end{rem}

Define $H := H_{P_1, s}^1(\R^3)\times H_{P_2, s}^1(\R^3)\times H_{P_3, s}^1(\R^3)$, endowed with the norm
$$
\|(u,v,w)\|^2 = \|u\|^2_{P_1}+\|v\|^2_{P_2}+\|w\|^2_{P_3}.
$$
Let
$$
S^k= \left(r\cos\frac{2(k-1)\pi}{\ell},r\sin\frac{2(k-1)\pi}{\ell},0\right):=(x'^k,0),\quad k=1,2,\cdots,\ell,
$$
with $r\in[r_0 \ell\ln\ell,r_1 \ell\ln\ell]$ for some $r_1>r_0>0$. Let
$$ T^k= \left(\rho\cos\frac{(2k-1)\pi}{\ell},\rho\sin\frac{(2k-1)\pi}{\ell},0\right):=(y'^k,0),\quad k=1,2,\cdots,\ell,$$
with $\rho\in[\rho_0 \ell\ln\ell, \rho_1 \ell\ln\ell]$ for some $\rho_1 > \rho_0>0$.

\medskip
To simplify, from now on, we will use $(u, v, w)$ to denote the three components $u_j$. Fix $(U, V)$ given by \eqref{uv} and $W = W^*/\sqrt{\mu_3}$. Hence $(U, V)$ satisfies \eqref{eqs1.3} and $W$ is a solution to $-\Delta w + w = \mu_3 w^3$ in $\R^3$. Our ansatz is given by
$$
U_r= \sum\limits_{k=1}^\ell U_{S^k}(x),\quad V_r= \sum\limits_{k=1}^\ell V_{S^k}(x),\quad W_\rho(x)=\sum\limits_{k=1}^\ell W_{T^k}(x)
$$
where $f_\xi(x) := \tau_\xi f(x) = f(x-\xi)$ for any $\xi\in\R^3$ and function $f$. Clearly $(U_r,V_r,W_\rho) \in H$. We will prove Theorem \ref{th1.1} by showing
\begin{theorem} \label{th1.3}
Under the assumptions of Theorem \ref{th1.1}, there are $r_1 > r_0 > 0$, $\rho_1 > \rho_0 > 0$, $\ell_0>0$ such that for any integer $\ell \geq \ell_0$, \eqref{eq1} has a solution of the form
$$
(u_\ell,v_\ell,w_\ell)=(U_{r}+\varphi,V_{r} +\psi,W_{\rho} +\xi)
$$
where $(\varphi,\psi,\xi)\in H, r \in[r_0 \ell\ln\ell,r_1 \ell\ln\ell]$, $\rho \in[\rho_0 \ell\ln\ell, \rho_1 \ell\ln\ell]$ and $$\lim\limits_{\ell\to \infty}\|(\varphi,\psi,\xi)\|_{H^1\cap L^\infty(\R^3)} = 0.$$
\end{theorem}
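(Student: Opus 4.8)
The plan is to construct the solution by a Lyapunov--Schmidt reduction in the symmetric space $H$. Write $J$ for the natural energy functional \eqref{2.1} associated with \eqref{eq1}, restricted to $H$, whose critical points are solutions; we seek a zero of $J'$ of the form $(U_r+\varphi,V_r+\psi,W_\rho+\xi)$. The symmetries defining $H_{P_j,s}$ --- evenness in $x_2,x_3$ and invariance under the rotation by $2\pi/\ell$ --- annihilate all the translational near-kernel directions of the elementary bumps except the two ``breathing'' ones, $(\partial_r U_r,\partial_r V_r,0)$ and $(0,0,\partial_\rho W_\rho)$: by the nondegeneracy of $(U,V)$ for \eqref{eqs1.3} when $\beta_{12}\in\Lambda$, the kernel of the $(u,v)$-linearization is ${\rm Span}\{(\partial_{x_j}U,\partial_{x_j}V),\,j=1,2,3\}$, and the classical nondegeneracy of $W$ for $-\Delta w+w=\mu_3 w^3$ gives the analogous statement for the $w$-block. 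Set $H=H_1\oplus H_2$ with $H_1={\rm Span}\{(\partial_r U_r,\partial_r V_r,0),(0,0,\partial_\rho W_\rho)\}$ and $H_2=H_1^{\perp}$; then $J'=0$ splits into an infinite-dimensional equation (its $H_2$-projection) and a two-dimensional bifurcation equation for $(r,\rho)$.

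\medskip
First I would treat the infinite-dimensional equation. One bounds the error $\|J'(U_r,V_r,W_\rho)\|$ in the relevant dual norm by three kinds of terms: (a) the interaction of neighbouring bumps on the same polygon, which by \eqref{wy1} is of order $e^{-|S^k-S^{k+1}|}$ with $|S^k-S^{k+1}|$ of order $r\sin(\pi/\ell)$, hence a negative power of $\ell$; (b) the potential contribution from $(P)$, of order $r^{-m_j}$ and $\rho^{-m_3}$; and (c) the cross-interaction between the $(U,V)$-polygon and the $W$-polygon, which carries the prefactor $|\beta_{13}|+|\beta_{23}|<\delta$ and is exponentially small in the distance between the two polygons. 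Choosing the radial ranges $[r_0\ell\ln\ell,r_1\ell\ln\ell]$ and $[\rho_0\ell\ln\ell,\rho_1\ell\ln\ell]$ \emph{disjoint} makes that distance of order $\ell\ln\ell$, so (c) is super-polynomially small, while the scaling by $\ell\ln\ell$ renders (a) and (b) comparable and $o(1)$. One then shows that the linearization $\mathcal{L}_\ell$ of \eqref{eq1} at $(U_r,V_r,W_\rho)$, restricted to $H_2$, is invertible with the norm of its inverse bounded uniformly in $\ell$ and in $(r,\rho)$ over the admissible set: this follows from the two nondegeneracy facts above once one notes that the coupling between the $(u,v)$- and $w$-blocks of $\mathcal{L}_\ell$ has operator norm $O(\delta)$, so for $\delta$ small $\mathcal{L}_\ell$ is a uniformly small perturbation of a block operator invertible modulo $H_1$. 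Together with the routine $O(\|(\varphi,\psi,\xi)\|^2)$ estimate on the cubic remainder, the contraction mapping principle yields, for each $\ell\ge\ell_0$ and each admissible $(r,\rho)$, a unique small $(\varphi,\psi,\xi)=(\varphi_{r,\rho},\psi_{r,\rho},\xi_{r,\rho})\in H_2$, of class $C^1$ in $(r,\rho)$, with $\|(\varphi_{r,\rho},\psi_{r,\rho},\xi_{r,\rho})\|_{H^1\cap L^\infty}\to0$ as $\ell\to\infty$ (the $L^\infty$ part by elliptic regularity).

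\medskip
It then remains to choose $(r,\rho)$ so that the two Lagrange multipliers vanish, equivalently to find an interior critical point of the reduced energy $\mathcal F(r,\rho):=J(U_r+\varphi_{r,\rho},V_r+\psi_{r,\rho},W_\rho+\xi_{r,\rho})$ on the rectangle. Expanding $\mathcal F$ --- and checking that the reduction map contributes only at strictly higher order --- one gets, up to a constant $\ell c_0$ collecting the $\ell$ elementary bump energies, and a remainder $\mathcal E_\ell$ negligible with respect to the $r$- and $\rho$-dependent terms,
\begin{align*}
\mathcal F(r,\rho)=\ell c_0-\ell\,\Psi_1\!\big(2r\sin\tfrac{\pi}{\ell}\big)-\ell\,\Psi_3\!\big(2\rho\sin\tfrac{\pi}{\ell}\big)+\ell\Big(\frac{B_1}{r^{m_1}}+\frac{B_2}{r^{m_2}}+\frac{B_3}{\rho^{m_3}}\Big)+\mathcal E_\ell(r,\rho),
\end{align*}
where $\Psi_1,\Psi_3>0$ are strictly decreasing and vanish at infinity, $B_3$ has the sign of $a_3$, and $B_1,B_2$ have the signs of $a_1,a_2$, the effective leading $r$-potential coefficient collapsing, in the resonant case $m_1=m_2=m_3$, to a positive multiple of $a_1\alpha^2+a_2\gamma^2$ because $(U,V)=(\alpha W^*,\gamma W^*)$. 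Under $(H_m)$ that effective coefficient and $B_3$ are positive in both cases $(i)$ and $(ii)$; a standard one-variable analysis then shows that for a suitable choice of $r_0<r_1$ and $\rho_0<\rho_1$ each of the two essentially decoupled profiles has a strict interior maximum, so that for $\ell$ large $\mathcal F$ attains its maximum over the rectangle at an interior point $(r_\ell,\rho_\ell)$, a conclusion stable under the perturbation $\mathcal E_\ell$. The resulting $(U_{r_\ell}+\varphi,V_{r_\ell}+\psi,W_{\rho_\ell}+\xi)$ solves \eqref{eq1}; synchronization of the first two components and their segregation from the third are built into the ansatz and the disjointness of the two radial ranges.

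\medskip
The main obstacle is the finite-dimensional step: carrying the expansion of $\mathcal F$ to an accuracy at which the $r$- and $\rho$-dependent terms genuinely dominate every remainder (in particular checking that $(r,\rho)\mapsto(\varphi,\psi,\xi)$ perturbs $\mathcal F$ only at higher order, and that the mixed $\beta_{13},\beta_{23}$-contributions stay below all competing scales through the threshold $\delta$), and then confirming that $(H_m)$ really forces an interior maximum in both cases $(i)$ and $(ii)$ --- the resonant regime $m_1=m_2=m_3$, where the competition among $a_1,a_2,a_3$ is sharpest, being the most delicate. The uniform invertibility of $\mathcal{L}_\ell$ is also somewhat technical because of the growing number of bumps, but once the nondegeneracy of $(U,V)$ and of $W$ is in hand it proceeds along by now standard lines.
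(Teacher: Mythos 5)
Your overall scheme (Lyapunov--Schmidt in the symmetric space, orthogonality to the $r$- and $\rho$-derivative directions, uniform invertibility of the linearization from the nondegeneracy of $(U,V)$ and of $W$ plus the $O(\delta)$ coupling, contraction mapping, then an interior maximum of the reduced energy) is the same as the paper's. But there is a genuine gap in the one mechanism you invoke to control the interaction between the two polygons: you propose to take the radial ranges $[r_0\ell\ln\ell,r_1\ell\ln\ell]$ and $[\rho_0\ell\ln\ell,\rho_1\ell\ln\ell]$ \emph{disjoint}, so that the two families of bumps are at mutual distance of order $\ell\ln\ell$ and the cross terms become super-polynomially small. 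This is incompatible with the reduced variational problem under $(H_m)$. The balance between the potential term $\ell\,a\,r^{-m}$ and the nearest-neighbour interaction $\frac{\ell^2}{r}e^{-2\pi r/\ell}$ pins the interior maximum of the $r$-profile at $r_\ell=(\frac{m_r}{2\pi}+o(1))\ell\ln\ell$ and of the $\rho$-profile at $\rho_\ell=(\frac{m_3}{2\pi}+o(1))\ell\ln\ell$, and in \emph{every} case of $(H_m)$ the effective $r$-exponent $m_r$ equals $m_3$ (case $(i)$ has $m_3=\min\{m_1,m_2\}=m_r$; case $(ii)$ has $m_1=m_2=m_3$). Hence any admissible rectangles containing interior maxima must both contain $\frac{m_3}{2\pi}\ell\ln\ell$, so they necessarily overlap; with disjoint ranges at least one one-variable profile is monotone on its range, its maximum sits on the boundary, and the reduction to a critical point of $F(r,\rho)$ collapses. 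This is why the paper takes one and the same interval $D_\ell$ in \eqref{defr} for both $r$ and $\rho$: the two polygons sit on essentially the same circle, and segregation is produced not by radial separation but by the $\pi/\ell$ angular interlacing built into the definitions of $S^k$ and $T^k$.

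Consequently the cross-interaction is governed by $|S^1-T^1|\sim|\rho-re^{\frac{i\pi}{\ell}}|\sim \pi r/\ell\sim C\ln\ell$, i.e.\ terms like $e^{-|\rho-re^{\frac{i\pi}{\ell}}|}$ are only \emph{polynomially} small in $\ell$, comparable to the main terms up to the prefactor $|\beta_{13}|+|\beta_{23}|$. Your argument therefore cannot dismiss them by distance alone; they have to be tracked with the smallness threshold $\delta$ at every stage, which is exactly what the paper does: in the error estimate for $L_1$ (Lemma \ref{lm2.5}), in the coercivity of $L_2$ on $E$ (Lemma \ref{lm2.2}, where terms such as $\int_{\R^3}W_\rho^2\varphi^2\,dx$ are of size $\|\varphi\|^2$ and are absorbed only because $|\beta_{13}|+|\beta_{23}|<\delta$), in the expansion of Proposition \ref{A.1}, and in the boundary analysis showing the maximum of $F$ is interior despite the $K(\beta)\frac{\ell^2}{r}e^{-2|\rho-re^{\frac{i\pi}{\ell}}|}$ contribution. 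Once you replace ``disjoint ranges $\Rightarrow$ negligible cross terms'' by this $\delta$-controlled bookkeeping on a common range (and note that the fixed-point estimate of Proposition \ref{pro2.6} must likewise carry the factor $(|\beta_{13}|+|\beta_{23}|)e^{-|\rho-re^{\frac{i\pi}{\ell}}|}\frac{\ell\sqrt{\ell}}{r}$), the rest of your outline matches the paper's proof.
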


Roughly speaking, synchronized components are small perturbations of $(U_r,V_r)$, with sums of translated $(U,V)$ to the vertices of a large sized regular polygon; the segregated component is a small perturbation of $W_\rho$ with peaks also  localized at the vertices of a large sized regular polygon, however with a $\frac{\pi}{\ell}$ rotation shift comparing to the two first synchronized components.

\begin{remark}
\label{remd}
Our method works also for general systems with $N \geq 4$. The main idea is to organize by groups of two or one components, where the two components in the same group are synchronized, while different groups are segregated from each other. Look just the case $N = 4$. We can choose the first two components $(u_1,u_2)$ to be synchronized as above, where $u_1=U_r+\varphi_1$, $u_2=V_r + \varphi_2$, we choose the third and fourth components as follows:
$$u_3 = \sum\limits_{k=1}^\ell \widehat U(x-T^j)+\varphi_3, \quad u_4 = \sum\limits_{k=1}^\ell \widehat V(x-T^j)+\varphi_4,$$ where $(\widehat U, \widehat V)$ is a positive radial solution of a similar system to \eqref{eqs1.3}, with $\mu_3, \mu_4, \beta_{34}$ instead of $\mu_1, \mu_2, \beta_{12}$. We can prove that given $\beta_{12}, \beta_{34}\in \Lambda$, if other coupling coefficients $\beta_{ij}$ are close enough to $0$ and $P_j$ satisfy $(P)$, then a sequence of solutions to \eqref{eq1} exists such that $(u_3, u_4)$ are also synchronized between them, but segregated with $(u_1, u_2)$.
\end{remark}

For the existence of sign-changing solutions with simultaneous synchronized and segregated components, we have
\begin{theorem}\label{th1.2}
Assume that $(P)$ and $(\widetilde H_m)$ are satisfied. Given any $\beta_{12} \in \Lambda$ in \eqref{lambda}, there exists $\delta>0$ small such that for any $|\beta_{13}|+|\beta_{23}|<\delta$, \eqref{eq1} has a sequence of  non-radial solutions $(\overline u_{j, \ell})$ where $\overline u_{i, \ell}$ are all sign-changing; $\overline u_{1, \ell}$, $\overline u_{2, \ell}$  are synchronized but segregated with $\overline u_{3, \ell}$. Moreover, the associated energy tends to infinity as $\ell\to \infty$,
$$\lim_{\ell\to \infty}\|\overline u_{1, \ell}\|_\infty = \alpha\|W^*\|_\infty, \quad \lim_{\ell \to +\infty} \|\overline u_{2, \ell}\|_\infty=\gamma\|W^*\|_\infty,  \quad \lim_{\ell\to \infty} \|\overline u_{3, \ell}\|_\infty= \|W^*\|_\infty;$$
and
\begin{align*}
\lim_{\ell\to \infty}\|\sqrt{|\mu_1-\beta_{12}|}\overline u_{1, \ell}-\sqrt{|\mu_2-\beta_{12}|} \overline u_{2, \ell}\|_{H^1\cap L^\infty(\R^3)} = 0.
\end{align*}
\end{theorem}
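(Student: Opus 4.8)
The plan is to reduce the sign-changing case to the positive case treated in Theorems \ref{th1.1}–\ref{th1.3} by means of an odd reflection. First I would exploit the oddness of the cubic nonlinearities: if $(u,v,w)$ solves \eqref{eq1} (with $N=3$), then so does $(\pm u,\pm v,\pm w)$ for any choice of signs. This suggests building sign-changing solutions whose ``building blocks'' are translates of $\pm U$, $\pm V$, $\pm W$ placed at the vertices of the polygons $\{S^k\}$ and $\{T^k\}$. Concretely, I would replace the ansatz $(U_r,V_r,W_\rho)$ of Theorem \ref{th1.3} by
$$
\overline U_r=\sum_{k=1}^\ell(-1)^{k}U_{S^k},\qquad
\overline V_r=\sum_{k=1}^\ell(-1)^{k}V_{S^k},\qquad
\overline W_\rho=\sum_{k=1}^\ell(-1)^{k}W_{T^k},
$$
which is well defined and smooth once $\ell$ is \emph{even}; the alternating signs force each of the three components to change sign. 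To preserve the symmetry structure that makes the Lyapunov–Schmidt scheme work, I would work in the subspace of $H$ consisting of triples that are even in $x_2,x_3$ and satisfy the \emph{anti-periodicity}
$u(r\cos\theta,r\sin\theta,x_3)=-\,u\big(r\cos(\theta+\tfrac{2\pi}{\ell}),r\sin(\theta+\tfrac{2\pi}{\ell}),x_3\big)$
instead of the periodicity in \eqref{eqs1.4}; this is exactly the symmetry group under which $(\overline U_r,\overline V_r,\overline W_\rho)$ is invariant, and it rules out the translation kernel directions just as before.

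The second step is to redo the reduction estimates. Because $U,V,W$ decay exponentially (see \eqref{wy1}) and the polygon radii are of order $\ell\ln\ell$, the sign pattern $(-1)^k$ changes \emph{none} of the leading-order interaction and energy estimates: cross terms between distinct bumps carry an extra $(-1)^{j+k}$ factor but are already exponentially negligible, and the self-interaction of each bump is sign-independent since the nonlinearity is cubic and homogeneous (so $(-U_{S^k})^3=-U_{S^k}^3$ cancels consistently against $-\Delta(-U_{S^k})$). Hence the energy expansion that drives the finite-dimensional reduction is, up to exponentially small corrections, identical to the positive case, with the potential contributions still governed by the signs of $a_j$. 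The point of switching from $(H_m)$ to $(\widetilde H_m)$ is that the reduced function $r,\rho\mapsto$ energy now has its critical point determined by the reversed inequalities $a_3<0$, $a_1<0$ (or $a_2<0$), $a_1\alpha^2+a_2\gamma^2<0$; the max/min structure that guaranteed an interior critical point of the reduced energy in Theorem \ref{th1.3} is recovered because the sign of the dominant $|x|^{-m_j}$ term in the energy is flipped. I would therefore state and prove the reduction lemmas (invertibility of the linearized operator on the orthogonal complement, contraction estimate for $(\varphi,\psi,\xi)$, $C^1$ expansion of the reduced energy) verbatim from the positive-case argument, simply substituting the anti-periodic symmetry and recording that all estimates are insensitive to the alternating signs.

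The last step is to extract the solution and verify the stated asymptotics. A critical point $(r_\ell,\rho_\ell)\in[r_0\ell\ln\ell,r_1\ell\ln\ell]\times[\rho_0\ell\ln\ell,\rho_1\ell\ln\ell]$ of the reduced energy yields, for every even $\ell\ge\ell_0$, a genuine solution $(\overline u_\ell,\overline v_\ell,\overline w_\ell)=(\overline U_{r_\ell}+\varphi,\overline V_{r_\ell}+\psi,\overline W_{\rho_\ell}+\xi)$ of \eqref{eq1} with $\|(\varphi,\psi,\xi)\|_{H^1\cap L^\infty}\to0$. Since near each $S^k$ the solution is $\pm U$ plus a small error, and the bumps are mutually far apart, $\|\overline u_\ell\|_\infty\to\|U\|_\infty=\alpha\|W^*\|_\infty$, and similarly for the other two; the $H^1\cap L^\infty$ smallness of $\sqrt{|\mu_1-\beta_{12}|}\,\overline u_\ell-\sqrt{|\mu_2-\beta_{12}|}\,\overline v_\ell$ follows because $\sqrt{|\mu_1-\beta_{12}|}\,U=\sqrt{|\mu_2-\beta_{12}|}\,V$ by the explicit formulas \eqref{ag} (both equal $\sqrt{|\mu_1\mu_2-\beta_{12}^2|^{-1}}\,W^*$ up to the common factor), so the leading terms cancel and only the small perturbations survive. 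Divergence of the energy to $+\infty$ as $\ell\to\infty$ is immediate since the energy is $\ell$ times a bump energy plus lower-order terms.

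\textbf{Main obstacle.} The genuinely delicate point is the sign and nondegeneracy bookkeeping for the reduced energy under the new symmetry and the new hypothesis $(\widetilde H_m)$: one must check that replacing periodic by anti-periodic invariance does not introduce new kernel directions in the linearized operator (so that the reduction still applies on the symmetric subspace), and that the interior critical point of the reduced energy — a balance between the exponentially attractive interaction of neighboring (now oppositely signed) bumps and the potential term carrying $a_j$ — persists with the correct sign once $a_j$ is replaced by $-|a_j|$. Both are expected to go through by the same min–max / implicit-function argument as in the positive case, but the exponential-interaction sign must be tracked carefully: with alternating signs the nearest-neighbor interaction is \emph{negative} (attractive becomes, after the sign flip, of the opposite sign to the positive case), and it is precisely this reversal that matches the reversed potential condition $(\widetilde H_m)$ and restores the existence of an interior critical point.
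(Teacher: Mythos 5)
Your proposal follows essentially the same route as the paper: an alternating-sign ansatz of translated bumps at the polygon vertices (the paper writes it with $2\ell$ peaks, you with even $\ell$, which is the same thing), the anti-periodic symmetric subspace \eqref{wy1.4} in place of \eqref{eqs1.4}, the same reduction lemmas, and an interior critical point of the reduced energy under $(\widetilde H_m)$.

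One point in your write-up must be corrected, because it concerns the crux of the matter. In your second paragraph you assert that the $(-1)^k$ factors change none of the leading-order interaction terms and that ``the energy expansion \dots is, up to exponentially small corrections, identical to the positive case.'' That cannot be right as stated: the nearest-neighbour cross terms, although exponentially small, are exactly the leading terms that balance the potential contribution $\ell\, a_j/r^{m_j}$ and determine the location of the critical point; in the positive case they enter with coefficient $-(\beta_{12}A_2+A_3)e^{-2\pi r/\ell}\ell^2/r$ and $-A_4 e^{-2\pi\rho/\ell}\ell^2/\rho$, and with alternating signs they flip to $+$ (this is the computation of $I_3$, $I_5$ in the paper's sketch of Theorem \ref{th1.4}). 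If only the potential sign flipped, as your second paragraph suggests, the reduced function would be monotone in $r$ and $\rho$ and have no interior critical point, so the construction would fail. Your final ``main obstacle'' paragraph does identify the correct mechanism (interaction sign reversal paired with $a_j<0$), so the fix is simply to carry that bookkeeping into the expansion itself: the paper's reduced functional $\bar F$ has both the potential terms negative and the exponential terms positive, and one then seeks a \emph{minimum} of $\bar F$ over $D_\ell$ rather than the maximum used for Theorem \ref{th1.3}. With that correction, the rest of your argument (invertibility of the linearized operator in the anti-periodic space, contraction, the $L^\infty$ asymptotics and the synchronization identity $\sqrt{|\mu_1-\beta_{12}|}\,U=\sqrt{|\mu_2-\beta_{12}|}\,V$) matches the paper.
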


To prove Theorem \ref{th1.2}, we denote
\begin{align}\label{wy1.4}
\begin{split}
\bar{H}_{V, s} &=\Big\{u:u\in H_V^1(\R^3), u \text{\ is\  even\  in} \ x_2, x_3;\cr
 & \quad\quad u(r\cos\theta,r\sin\theta,x_3)=-u\Big(r\cos\big(\theta+\frac{2\pi}{\ell}\big),r\sin\big(\theta+\frac{2\pi}{\ell}\big),x_3\Big), \;\forall \;\theta \in \R\Big\}.
 \end{split}
\end{align}
and $\bar{H}=\bar{H}_{P_1,s}\times\bar{H}_{P_2,s}\times \bar{H}_{P_3,s}$ endowed with the norm
$\|(u,v,w)\|$ defined as above. We will use the ansatz
$$\overline{U}_r = \sum\limits_{i=1}^{2\ell}(-1)^{i}U_{S^i}, \quad \overline{V}_r=\sum\limits_{i=1}^{2\ell}(-1)^{i}V_{S^i}, \quad \overline{W}_\rho=\sum\limits_{i=1}^{2\ell}(-1)^{i}W_{T^i}$$
and claim
\begin{theorem} \label{th1.4}
Under the assumptions of  Theorem \ref{th1.2}, there are $r_1 > r_0 > 0$, $\rho_1 > \rho_0 > 0$ and $\ell_0>0$ such that for any integer $\ell\geq \ell_0$, \eqref{eq1} has a solution of the form
$$
(\overline{u}_\ell,\overline{v}_\ell,\overline{w}_\ell) = (\overline{U}_{r} +\overline{\varphi},\overline{V}_{r} +\overline{\psi}, \overline{W}_{\rho}+\overline{\xi})
$$
where $(\overline{\varphi},\overline{\psi},\overline{\xi})\in \bar{H}$, $r\in[r_0 \ell\ln\ell,r_1 \ell\ln\ell]$, $\rho \in[\rho_0 \ell\ln\ell, \rho_1 \ell\ln\ell]$ and $$\lim\limits_{\ell\to \infty}\|(\overline{\varphi},\overline{\psi},\overline{\xi})\|_{H^1\cap L^\infty(\R^3)} = 0.$$
\end{theorem}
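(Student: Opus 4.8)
The plan is to establish Theorem \ref{th1.4} (hence Theorem \ref{th1.2}) by a Lyapunov--Schmidt reduction in the sign-changing symmetric space $\bar H$, essentially in parallel to the argument used for Theorem \ref{th1.3}. The key structural point is that the spaces $\bar H_{P_j, s}$ defined in \eqref{wy1.4} are again invariant under the relevant symmetry group, so the energy functional associated to \eqref{eq1} restricted to $\bar H$ has critical points that are genuine solutions of \eqref{eq1}; and the anti-periodic ansatz $(\overline U_r, \overline V_r, \overline W_\rho)$ lies in $\bar H$ because $(-1)^{i+\ell} = -(-1)^i$ forces the sign flip under a $2\pi/\ell$ rotation. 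First, I would set up the approximate solution: writing the solution as $(\overline U_r+\overline\varphi, \overline V_r+\overline\psi, \overline W_\rho+\overline\xi)$, I would show the error $E := $ (equation residual evaluated at $(\overline U_r,\overline V_r,\overline W_\rho)$) is small in the appropriate dual norm, uniformly for $r\in[r_0\ell\ln\ell, r_1\ell\ln\ell]$ and $\rho$ in its range. This estimate is where the choice of scale $\ell\ln\ell$ enters: the interaction between neighbouring bumps (spaced at distance $\sim \frac{2\pi r}{\ell}\sim 2\pi r_0\ln\ell$) decays like $e^{-c\ln\ell}=\ell^{-c}$, which beats the number $\ell$ of bumps, while the potential perturbation contributes a term of order $r^{-m_j}\sim (\ell\ln\ell)^{-m_j}$; the sign alternation only changes the sign of the cross terms and not their size, so these estimates carry over verbatim from the positive case.

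Next comes the linear theory. I would analyze the linearized operator $L_\ell$ at $(\overline U_r,\overline V_r,\overline W_\rho)$ acting on $\bar H$, and prove it is invertible on the subspace orthogonal to the approximate kernel spanned by the admissible translations $\partial_r(\text{bump sums})$ and $\partial_\rho(\text{bump sums})$ — that is, establish a uniform coercivity estimate $\|L_\ell\phi\|\geq c\|\phi\|$ for $\phi$ in that orthogonal complement, with $c$ independent of $\ell$. This rests on the nondegeneracy of $(U,V)$ for \eqref{eqs1.3} recalled in the introduction (valid precisely for $\beta_{12}\in\Lambda$) and the nondegeneracy of $W^*$ for the scalar equation; the coupling terms $\beta_{13}, \beta_{23}$ enter $L_\ell$ only through factors bounded by $\delta$, so for $\delta$ small they are a perturbation that does not destroy coercivity. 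The anti-symmetry is actually helpful here: restricting to $\bar H$ eliminates the rotational kernel directions and leaves only the radial-translation modes $\partial_r, \partial_\rho$, so the reduced finite-dimensional problem is genuinely two-dimensional.

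With $E$ small and $L_\ell$ uniformly invertible modulo the two-dimensional approximate kernel, the contraction mapping principle produces, for each $(r,\rho)$ in the prescribed boxes, a unique small $(\overline\varphi,\overline\psi,\overline\xi)=(\overline\varphi,\overline\psi,\overline\xi)(r,\rho)\in\bar H$ solving the auxiliary (infinite-dimensional) equation, with $\|(\overline\varphi,\overline\psi,\overline\xi)\|_{H^1}\to 0$; a standard bootstrap using the decay \eqref{wy1} upgrades this to $\|(\overline\varphi,\overline\psi,\overline\xi)\|_{H^1\cap L^\infty}\to 0$. Finally I would solve the reduced problem: the energy $\mathcal I_\ell(r,\rho):=J(\overline U_r+\overline\varphi,\overline V_r+\overline\psi,\overline W_\rho+\overline\xi)$ is a $C^1$ function of $(r,\rho)$ on the compact box, and I would expand it as $\mathcal I_\ell(r,\rho)=\text{const} + (\text{interaction terms, negative, }\sim -\ell e^{-\frac{2\pi r}{\ell}} \text{ etc.}) + (\text{potential terms }\sim \ell(a_1\alpha^2+a_2\gamma^2)r^{-m}+\ell a_3\rho^{-m_3}) + o(\cdot)$. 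Under $(\widetilde H_m)$ the potential coefficients are \emph{negative}, which — combined with the negative interaction terms — forces an interior critical point (a max, or a max in one variable / the structure dictated by the competing signs) of $\mathcal I_\ell$ in the open box $(r_0\ell\ln\ell, r_1\ell\ln\ell)\times(\rho_0\ell\ln\ell, \rho_1\ell\ln\ell)$ for suitable $r_0<r_1$, $\rho_0<\rho_1$ and $\ell$ large; any such critical point yields a genuine solution of \eqref{eq1} of the asserted form. The main obstacle I expect is the sharp energy expansion: one must compute the leading constants in both the bump--bump interaction terms and the potential terms precisely enough (including the mixed synchronized-vs-segregated cross interaction governed by $\beta_{13},\beta_{23}$, which must be shown to be of strictly lower order than the same-group interactions so it does not interfere with the variational argument), and then verify that the sign conditions in $(\widetilde H_m)$ indeed produce a critical point in the interior rather than on the boundary of the box.
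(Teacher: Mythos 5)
Your overall framework (reduction in $\bar H$, error estimates at scale $r,\rho\sim\ell\ln\ell$, coercivity of the linearized operator from the nondegeneracy of $(U,V)$ and $W^*$ with $\beta_{13},\beta_{23}$ small, contraction mapping, $L^\infty$ bootstrap, then a reduced two-dimensional variational problem) is the same as the paper's. But there is a genuine error precisely at the step the paper singles out as the only real difference from the positive case: the sign of the bump--bump interaction in the reduced energy. You assert that ``the sign alternation only changes the sign of the cross terms and not their size, so these estimates carry over verbatim,'' and then write the reduced energy with \emph{negative} interaction terms $\sim -\ell e^{-\frac{2\pi r}{\ell}}$ together with negative potential terms under $(\widetilde H_m)$, claiming this combination ``forces an interior critical point.'' It does not. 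For the alternating ansatz the nearest-neighbour products carry a factor $(-1)^{k+i}=-1$ (e.g.\ the term $2\sum_{i\neq k}(-1)^{k+i}W_{T^k}^3W_{T^i}$ in the expansion of $\int \overline W_\rho^4$), so the leading interaction contributions flip sign and appear in the reduced energy as $+(\beta_{12}A_2+A_3)e^{-\frac{2\pi r}{\ell}}\frac{\ell^2}{r}$ and $+A_4 e^{-\frac{2\pi\rho}{\ell}}\frac{\ell^2}{\rho}$. If instead, as in your expansion, both the potential terms (negative under $(\widetilde H_m)$) and the interaction terms were negative, then each main term is negative and increases monotonically to $0$ as $r,\rho\to\infty$; there is no balance, the extrema of the reduced energy over the box $D_\ell$ would sit on its boundary, and no interior critical point is produced. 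Your hedge ``a max, or a max in one variable / the structure dictated by the competing signs'' leaves exactly this mechanism unspecified, and with your signs it fails.

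The correct argument, which is the paper's: because of the antisymmetric ansatz the same-group interactions become \emph{positive} (energy-increasing) at leading order, while $(\widetilde H_m)$ makes the potential contributions $\ell\bigl(\frac{a_1\alpha^2+a_2\gamma^2}{r^{m}}\bigr)A_1$ and $\ell\frac{a_3}{\mu_3\rho^{m_3}}A_1$ negative; the competition between a negative algebraically decaying term and a positive exponentially decaying term then yields an interior \emph{minimum} of the reduced energy $\bar F$ at $r,\rho=(\frac{m}{2\pi}+o(1))\ell\ln\ell$, mirroring (with reversed signs) the maximization argument of the positive case. You should also note that with the antisymmetric ansatz the number of peaks is $2\ell$ rather than $\ell$, which enters the bookkeeping of the expansion. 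The remaining parts of your proposal (smallness of the residual, invertibility of the linearized operator on the symmetric class, the contraction step, and the $C^1$ dependence of the reduced energy) do match the paper's scheme.
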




We will apply the Lyapunov-Schmidt reduction techniques to handle the perturbed elliptic problems.
In particular, we are inspired by \cite{PW,WY} by using the number of peaks as
a parameter in the construction of spike solutions for system \eqref{eq1}.
However we encounter some new difficulties due to the complex nonlinear couplings.

\medskip
This paper is organized as follows. In Section 2, we will introduce some preliminaries and basic estimates. In section 3, we give the proof of a key lemma for the invertibility of involved linearized operator. The energy expansion is given in section 4. We will carry out the reduction to a finite dimensional setting then prove Theorem \ref{th1.3} and \ref{th1.4} in section 5.

\section{Preliminaries and basic estimates}
Let $N = 3$ and $I$ be the energy associated to the system \eqref{eq1}, that is,
\begin{align}\label{2.1}
\begin{split}
I(u,v,w)
&=\frac{\|(u, v, w)\|^2}{2} -\frac{1}{4} \int_{\R^3}\Big(\mu_1 u^4+\mu_2 v^4+\mu_3 w^4\Big)dx\\
&\quad -\frac{1}{2}\ds\int_{\R^3} \Big(\beta_{12}u^2v^2+\beta_{13}u^2w^2+\beta_{23}v^2w^2\Big)dx.
\end{split}
\end{align}
Let $(U_{S^k}, V_{S^k}, W_{T^k})$ be defined as previously, denote
$$
X_k =\frac{\partial U_{S^k}}{\partial r}, \quad Y_k=\frac{\partial V_{S^k}}{\partial r},\quad  Z_k=\frac{\partial W_{T^k}}{\partial \rho}, \quad k = 1,\cdots,\ell.
$$
We define the space for perturbation terms $(\varphi, \psi, \xi)$ be the following.
\begin{align}
\label{spaceE}
\begin{split}
E& = \Bigl\{(\varphi, \psi, \xi) \in H,  \ds\sum\limits_{k=1}^\ell \ds\int_{\R^3}W_{S^k}^{*2}\Big(X_k \varphi +Y_k \psi\Big) dx=0, \ds\int_{\R^3}W_{T^k}^{*2}Z_k\xi dx=0, \;  k = 1,... \ell \Bigr\}\cr
& = \Bigl\{(\varphi, \psi, \xi) \in H,   \ds\int_{\R^3}W_{S^1}^{*2}\Big(X_1 \varphi +Y_1 \psi\Big) dx=0, \ds\int_{\R^3}W_{T^1}^{*2}Z_1\xi dx=0 \Bigr\}.
\end{split}
 \end{align}
The second line is ensured by the symmetry of $H$.

\medskip
For $(\varphi, \psi, \xi) \in E$, let $$J(\varphi,\psi,\xi) = I(U_r+\varphi,V_r+\psi,W_\rho+\xi),$$ we will decompose $J(\varphi,\psi,\xi)$. Recalling that $(U, V)$ resolves \eqref{eqs1.3} and $W$ satisfies $-\Delta W+ W = \mu_3 W^3$, there holds
$$
J(\varphi,\psi,\xi)=J(0,0,0)+ L_1(\varphi,\psi,\xi)+\frac{1}{2}L_2(\varphi,\psi,\xi) + R(\varphi,\psi,\xi)
$$
where
\begin{align*}
L_1(\varphi,\psi,\xi)
&= \int_{\R^3}\mu_1\Bigl(\sum\limits_{k=1}^\ell U_{S^k}^3-U_r^3\Bigr)\varphi+ \mu_2\Bigl(\sum\limits_{k=1}^\ell V_{S^k}^3-V_r^3\Bigr)\psi + \mu_3\Big(\sum\limits_{k=1}^\ell W_{T^k}^3-W_\rho^3\Big)\xi dx
\\
&\quad
+\int_{\R^3}\Big[(P_1(x) -1)U_r\varphi+(P_2(x) -1)V_r\psi+(P_3(x) -1)W_\rho\xi \Big] dx\\
&\quad
+ \beta_{12}\int_{\R^3}\Bigl(\sum\limits_{k=1}^\ell V_{S^k}^2U_{S^k}-V_r^2U_r\Bigr)\varphi dx + \beta_{12} \int_{\R^3}\Bigl(\sum\limits_{k=1}^\ell U_{S^k}^2V_{S^k}-U_r^2V_r\Bigr)\psi dx\\
& \quad - \beta_{13}\int_{\R^3}\Bigl(U_r W_\rho^2\varphi+U_r^2W_\rho\xi\Bigr)dx-\beta_{23}\int_{\R^3}\Bigl(V_r W_\rho^2\psi+V_r^2W_\rho\xi\Bigr)dx
\end{align*}
is the linear part; the quadratic part is given by
\begin{align*}
L_2(\varphi,\psi,\xi)
&= \|(\varphi, \psi, \xi)\|^2 - 3 \int_{\R^3} \mu_1 U_r^2\varphi^2 dx - 3\int_{\R^3}\mu_2 V_r^2\psi^2 dx - 3 \int_{\R^3} \mu_3 W_\rho^2\xi^2 dx\\
&\quad -\beta_{12} \int_{\R^3}\Bigl(U_r^2\psi^2+4U_r V_r\varphi\psi+V_r^2\varphi^2\Bigr)dx\\
&\quad -\beta_{13} \int_{\R^3}\Bigl(U_r^2\xi^2+4U_r W_\rho\varphi\xi+W_\rho^2\varphi^2\Bigr)dx\\
&\quad -\beta_{23} \int_{\R^3}\Bigl(V_r^2\xi^2+4V_r W_\rho\xi\psi+W_\rho^2\psi^2\Bigr)dx,
\end{align*}
and $R(\varphi,\psi,\xi)$ contains all higher order terms, that is
\begin{align}\label{wqf2.5}
\begin{split}
R(\varphi,\psi,\xi)
&=
-\int_{\R^3}\Bigl(\mu_1 U_r\varphi^3+\mu_2 V_r\psi^3+\mu_3 W_\rho\xi^3+\frac{\mu_1}{4}\varphi^4+\frac{\mu_2}{4}\psi^4+\frac{\mu_3}{4}\xi^4\Bigr)dx\\
& \quad -\beta_{12}{\mathcal R}(U_r, V_r, \varphi, \psi) - \beta_{13}{\mathcal R}(U_r, W_\rho, \varphi, \xi) - \beta_{23}{\mathcal R}(V_r, W_\rho, \psi, \xi)
\end{split}
\end{align}
where
\begin{align*}
{\mathcal R}(F, G, \eta, \zeta) := \int_{\R^3} \Big(F\eta\zeta^2 + G\zeta\eta^2 + \frac{\eta^2\zeta^2}{2}\Big) dx
\end{align*}
From now on, we will always assume
\begin{align}
\label{defr}
(r,\rho)\in D_\ell := \Big[\Big(\frac{m_3}{2\pi}-\delta \Big)\ell\ln\ell,\ M\ell\ln\ell\Big] \times \Big[\Big(\frac{m_3}{2\pi}-\delta \Big)\ell\ln\ell,\ M\ell\ln\ell\Big],
\end{align}
where $\delta>0$ is a small constant and $M > 0$ is a large constant to be chosen later, depending on $a_1,a_2$ and $A_j$ in Proposition \ref{A.1} below.

\medskip
The basic estimate for the functional $R$ is the following.
\begin{lemma}\label{lm2.4}
There exists a constant $C>0$, independent of $\ell$ large enough, such that
$$
\left\|R^{(i)}(\varphi,\psi,\xi)\right\| \leq C\|(\varphi,\psi,\xi)\|^{3-i}, \quad \forall\; \|(\varphi,\psi,\xi)\| \leq 1, \; i = 0, 1, 2.
$$
\end{lemma}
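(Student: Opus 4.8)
The plan is to estimate each piece appearing in the formula \eqref{wqf2.5} for $R$ and its derivatives, treating the cubic and quartic self-interaction terms first and then the coupled remainder $\mathcal R$. Since $R$ is a polynomial of degree $4$ in $(\varphi,\psi,\xi)$ with no constant, linear, or quadratic part, $R^{(2)}$ is affine in $(\varphi,\psi,\xi)$ with a constant term, $R^{(1)}$ is a polynomial of degree $2$ with no constant term, and $R^{(0)}$ itself has no constant or linear term; so the desired bounds are the ``expected'' scaling. The point is only that the constants are uniform in $\ell$. First I would recall that, because the norms $\|\cdot\|_{P_j}$ are equivalent to the standard $H^1(\R^3)$ norm uniformly, and $H^1(\R^3)\hookrightarrow L^p(\R^3)$ continuously for $2 \le p \le 6$, it suffices to bound everything using $L^4$ and $L^6$ norms of $\varphi,\psi,\xi$ controlled by $\|(\varphi,\psi,\xi)\|$.

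Next I would treat a model term, say $\int_{\R^3}\mu_1 U_r\varphi^3\,dx$. Its second derivative in the direction $(h_1,h_2,h_3)^{\otimes 2}$ is (a multiple of) $\int U_r h_1^2 \varphi\,dx$ plus $\int U_r h_1 \varphi \,(\cdots)$; each such integral is bounded by $\|U_r\|_{L^\infty}\|h_1\|_{L^3}^2\|\varphi\|_{L^3} \lesssim \|U_r\|_{L^\infty}\|(h_1,h_2,h_3)\|^2\|(\varphi,\psi,\xi)\|$, using Hölder with exponents $(\infty,3,3,3)$ and Sobolev. So I need $\|U_r\|_{L^\infty} \le C$ uniformly in $\ell$: this holds because $U_r = \sum_k U_{S^k}$ is a sum of $\ell$ translates of $\alpha W^*$ whose centers $S^k$ are mutually separated by distances $\gtrsim \ell\ln\ell \cdot (2\pi/\ell) = 2\pi\ln\ell \to\infty$, and $W^*$ decays exponentially by \eqref{wy1}; a standard lemma (sum of well-separated exponentially decaying bumps) gives $\|U_r\|_{L^\infty}\le C\|W^*\|_{L^\infty}$ and likewise $\|U_r\|_{L^p}\le C$ for all $p\in[2,\infty]$, uniformly in $\ell$. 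The same applies to $V_r,W_\rho$. The quartic term $\int \varphi^4$ contributes to $R^{(2)}$ a term $\lesssim \|h_1\|_{L^4}^2\|\varphi\|_{L^4}^2$, hence the claimed bound when $\|(\varphi,\psi,\xi)\|\le 1$; and similarly for lower derivatives and the cross terms $U_r W_\rho \varphi \xi^2$ etc. in $\mathcal R$, each of which is handled by a Hölder split $(\infty,\infty,p,p)$ or $(\infty,p,p,p)$ with $p\in\{3,4\}$ followed by Sobolev embedding, always keeping one of the $L^\infty$-bounded profiles $U_r,V_r,W_\rho$ where one appears.

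For the first derivative $R^{(1)}$ I differentiate once, landing on polynomials of degree $2$ in $(\varphi,\psi,\xi)$ with coefficients in $\{U_r,V_r,W_\rho,1\}$, so each resulting integral is bounded by (an $L^\infty$-norm of a profile, or $1$) times a product of two $L^p$ norms ($p=3$ or $4$), hence $\lesssim \|(\varphi,\psi,\xi)\|^2$; and $R^{(0)}$ itself, being a sum of monomials of degree $3$ and $4$, is bounded by $\|(\varphi,\psi,\xi)\|^3 + \|(\varphi,\psi,\xi)\|^4 \le 2\|(\varphi,\psi,\xi)\|^3$ on the unit ball. Assembling these estimates gives the lemma with a constant $C$ depending only on $\mu_j$, $|\beta_{ij}|$, the Sobolev constant, and the uniform bound $\sup_\ell(\|U_r\|_\infty+\|V_r\|_\infty+\|W_\rho\|_\infty)$. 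I expect the only genuinely non-routine point to be the uniform-in-$\ell$ bound on the $L^\infty$ (and $L^p$) norms of the ansatz sums $U_r, V_r, W_\rho$; everything else is Hölder plus Sobolev bookkeeping. This uniform bound is exactly the kind of estimate that should be recorded among the ``basic estimates'' of Section 2 (it follows from \eqref{wy1} and the separation $|S^k-S^{k'}|\gtrsim \ln\ell$, via the standard estimate $\sum_{k} e^{-|x-S^k|}\le C$ uniformly), so I would cite it at that point and keep the proof of Lemma \ref{lm2.4} to the Hölder/Sobolev argument sketched above.
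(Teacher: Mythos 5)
Your proposal is correct and follows essentially the same route as the paper, whose proof is exactly the one-line invocation of the uniform boundedness of $U_r,V_r,W_\rho$ for $\ell$ large together with the Sobolev embedding; you merely spell out the H\"older bookkeeping and the reason (exponential decay of $W^*$ plus peak separation of order $\ln\ell$) behind the uniform $L^\infty$ bound. One small caveat: your parenthetical claim that $\|U_r\|_{L^p}\le C$ uniformly in $\ell$ for finite $p$ is not true (such norms grow like $\ell^{1/p}$), but this is harmless since your estimates only ever use the $L^\infty$ bounds of the profiles.
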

\begin{proof}
By the uniform boundedness of $U_r, V_r$ and $W_\rho$ for $\ell$ large, and the Sobolev embedding, there holds
\begin{align*}
|R(\varphi,\psi,\xi)|\leq & C\Big(\|(\varphi,\psi,\xi)\|^3+\|(\varphi,\psi,\xi)\|^4\Big).
\end{align*}
Similarly, $\|R'(\varphi,\psi,\xi)\|\leq C\|(\varphi,\psi,\xi)\|^2$, $\|R''(\varphi,\psi,\xi)\|\leq C\|(\varphi,\psi,\xi)\|$ for $\|(\varphi,\psi,\xi)\| \leq 1$. So we are done.
\end{proof}

Next we estimate the linear operator $L_1$.
\begin{lemma}\label{lm2.5}
There exists a constant $C>0$, independent of $\ell$ large, such that
$$
\|L_1\|\leq C\Big[\frac{\ell}{r^{\min\{m_1,m_2\}}} + \frac{\ell}{\rho^{m_3}}
+ (|\beta_{13}|+|\beta_{23}|)e^{-|\rho-re^{\frac{i\pi}{\ell}}|}\frac{\ell\sqrt{\ell}}{r}\Big].
$$
\end{lemma}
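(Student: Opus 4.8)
\textbf{Proof proposal for Lemma \ref{lm2.5}.}

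The plan is to estimate $\|L_1\|$ term by term according to the decomposition of $L_1$ into the three groups of integrals: the self-interaction error among the translated bumps (first line of $L_1$), the potential perturbation $(P_j-1)$ contributions (second line), the $\beta_{12}$ cross terms (third line), and finally the $\beta_{13},\beta_{23}$ cross terms (fourth line). For each piece, the strategy is to test against an arbitrary $(\varphi,\psi,\xi)\in H$ with $\|(\varphi,\psi,\xi)\|\leq 1$, use H\"older to pull out the $L^2$ (or $L^{6/5}$ paired with Sobolev $L^6$) norm of the perturbation, and then bound the remaining $L^2$ (or $L^{6/5}$) norm of the explicit bump-interaction integrand.

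First I would treat the self-interaction terms such as $\mu_1(\sum_k U_{S^k}^3 - U_r^3)$. Expanding the cube of a sum, the diagonal terms cancel $U_r^3$, leaving only mixed products $U_{S^k}^2 U_{S^j}$ with $j\neq k$. Using the exponential decay \eqref{wy1} of $W^*$ together with the fact that the peaks $S^k$ are placed on a regular $\ell$-gon of radius $r\sim\ell\ln\ell$ so that $|S^k-S^j|\gtrsim \frac{r}{\ell}\cdot|k-j|\gtrsim \ln\ell\cdot|k-j|$, one gets that $\|\sum_{j\neq k}U_{S^k}^2 U_{S^j}\|_{L^{6/5}}$ over one fundamental sector is exponentially small, and summing the $\ell$ symmetric copies contributes the factor $\ell$; this is the standard Wei–Yan interaction estimate and produces a term absorbed by $\frac{\ell}{r^{\min\{m_1,m_2\}}}$ (indeed much smaller). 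For the potential terms, $(P_1-1)U_r$ has, by $(P)$, size $\lesssim |x|^{-m_1}U_r$ near each peak; testing against $\varphi$ and using that $\int_{\R^3}|x|^{-2m_1}U_{S^k}^2\,dx \lesssim |S^k|^{-2m_1}=r^{-2m_1}$ (since $U_{S^k}$ concentrates at distance $r$ from the origin and $m_1>1$ ensures integrability of the tail), summing over $k$ and taking square roots gives the $\frac{\ell}{r^{m_1}}$ contribution; the $V_r$ term gives $\frac{\ell}{r^{m_2}}$ and the $W_\rho$ term gives $\frac{\ell}{\rho^{m_3}}$, and $\frac{\ell}{r^{m_1}}+\frac{\ell}{r^{m_2}}\lesssim \frac{\ell}{r^{\min\{m_1,m_2\}}}$. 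The $\beta_{12}$ terms $\sum_k V_{S^k}^2 U_{S^k} - V_r^2 U_r$ are again purely off-diagonal interactions between peaks at $\{S^k\}$, hence exponentially small just like the first group and absorbed into $\frac{\ell}{r^{\min\{m_1,m_2\}}}$.

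The genuinely new and dominant contribution is the last group, the $\beta_{13}$ and $\beta_{23}$ mixed terms like $\beta_{13}\int U_r W_\rho^2\varphi\,dx$. Here $U_r$ is a sum of bumps at the $\ell$-gon of radius $r$ and $W_\rho$ a sum of bumps at the \emph{rotated} $\ell$-gon of radius $\rho$, so the overlap of a $U$-bump at $S^k$ with a $W$-bump at $T^j$ is governed by $e^{-\mathrm{dist}(S^k,T^j)}$, minimized when $j=k$ (or $j=k\pm1$) at distance $|\rho-re^{i\pi/\ell}|$ (the gap between a vertex and an adjacent mid-edge vertex of the two polygons). Estimating $\|U_r W_\rho^2\|_{L^{6/5}}$ by concentrating near each near-pair and summing the $\ell$ rotationally equivalent pairs yields a factor $\ell$, and a careful count of how many $T^j$ are within comparable distance of a given $S^k$ (bounded, times a $\sqrt{\ell}$ loss from the $H^1\hookrightarrow L^6$ duality when converting the $L^{6/5}$ sum over sectors into the claimed power) produces the stated $e^{-|\rho-re^{i\pi/\ell}|}\,\ell\sqrt{\ell}/r$; the extra $1/r$ comes from the algebraic prefactor $|x|^{(1-d)/2}=|x|^{-1}$ in \eqref{wy1} evaluated at the peak locations of size $r$. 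Collecting the four groups and using $(|\beta_{13}|+|\beta_{23}|)\leq \delta$ small gives exactly the claimed bound.

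The main obstacle I anticipate is the bookkeeping in the last group: making precise the claim that $\mathrm{dist}(S^k,T^j)\geq |\rho-re^{i\pi/\ell}|$ with the right behavior as $j$ ranges over all vertices, and tracking the exact power of $\ell$ (the $\ell\sqrt\ell$) and the $1/r$ factor through the H\"older/Sobolev duality — this requires the elementary but delicate geometric lemma comparing distances between two concentric regular $\ell$-gons with a $\pi/\ell$ phase shift, and the summation $\sum_j e^{-c|S^k-T^j|}$ being controlled by its leading term. Everything else reduces to the now-classical multi-bump interaction estimates as in \cite{WY,PW}.
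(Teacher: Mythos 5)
Your overall strategy — splitting $L_1$ into the self-interaction, potential, $\beta_{12}$ and $\beta_{13},\beta_{23}$ groups, estimating each by H\"older plus the exponential interaction estimates, and absorbing the exponentially small bump-bump terms into $\ell/r^{\min\{m_1,m_2\}}$ — is the same as the paper's, and your treatment of the potential terms and of the $\beta_{12}$ and self-interaction terms is essentially correct (up to the harmless fact that near the origin one uses boundedness of $P_j$ rather than the tail expansion).

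The gap is in the dominant $\beta_{13},\beta_{23}$ term, i.e.\ precisely where the stated factor $e^{-|\rho-re^{\frac{i\pi}{\ell}}|}\frac{\ell\sqrt{\ell}}{r}$ must be produced, and your bookkeeping there does not deliver it. First, the $1/r$ does not come from the prefactor $|x|^{-1}$ in \eqref{wy1} ``evaluated at the peak locations of size $r$'': the relevant quantity is the interaction estimate $\int U_{S^1}W_{T^1}^2\,dx\leq C\,e^{-|S^1-T^1|}/|S^1-T^1|$, and when $\rho\approx r$ the separation $|S^1-T^1|$ is only of order $r/\ell\sim\ln\ell$, much smaller than $r$; one then needs the geometric lower bound $|S^1-T^1|\geq c\,r/\ell$ (coming from the $\pi/\ell$ angular offset of the two polygons) to convert $1/|S^1-T^1|$ into $C\ell/r$ — a fact you only flag as an ``anticipated obstacle'' rather than use. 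Second, and more seriously, the power $\ell\sqrt{\ell}$ cannot be obtained by a global H\"older $L^{6/5}$--$L^6$ estimate plus a vague ``$\sqrt\ell$ loss from duality'': summing the $\ell$ near-pairs globally gives $\|U_rW_\rho^2\|_{L^{6/5}}\leq C\ell\,e^{-|S^1-T^1|}/|S^1-T^1|$ and hence only $C\frac{\ell^2}{r}e^{-|\rho-re^{\frac{i\pi}{\ell}}|}$, which is weaker than the claimed bound by $\sqrt{\ell}$. The missing $\ell^{-1/2}$ gain comes from the symmetry of the test functions in $H$: one reduces to a single sector $\Omega_1$ and uses $\|\xi\|_{L^2(\Omega_1)}=\ell^{-1/2}\|\xi\|_{L^2}$, pairing in $L^2$ rather than via the Sobolev embedding (this is exactly how the paper argues in \eqref{eqs3.10}). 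Without invoking this sector reduction for the test function, the stated estimate for $\|L_1\|$ is not proved; with it, your remaining steps go through.
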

\begin{proof}
By the symmetry of our setting, the assumption $(P)$, the location of $r$ and the exponential decay of $U$, we get, for large enough $\ell$
\begin{align}\label{eqs3.5}
\begin{split}
\left|\int_{\R^3}(P_1-1)U_r\varphi dx \right|
& = \left|\int_{\R^3}(P_1-1)\sum\limits_{k=1}^\ell U_{S^k}\varphi \right|= \ell \left|\int_{\R^3}(P_1-1) U_{S^1}\varphi dx\right| \\
& = \ell \left|\int_{\R^3}\big[P_1(x + S^1)-1\big] U\varphi(x+ S^1)dx\right|\\
&\leq C\frac{\ell}{r^{m_1}}\|\varphi\|_{L^2}.
\end{split}
\end{align}
Similarly,
\begin{align*}
\left|\int_{\R^3}(P_2 - 1)V_r\psi dx\right| \leq C\frac{\ell}{r^{m_2}}\|\psi\|_{L^2},\quad \left|\int_{\R^3}(P_3-1)W_\rho\xi dx\right| \leq C\frac{\ell}{\rho^{m_3}}\|\xi\|_{L^2}.
\end{align*}

Next, we estimate
$$
\mu_1\int_{\R^3}\Bigl(\sum\limits_{k=1}^\ell U_{S^k}^3-U_r^3\Bigr)\varphi dx +\mu_2\int_{\R^3}\Bigl(\sum\limits_{k=1}^\ell V_{S^k}^3-V_r^3\Bigr)\psi dx.
$$
Thanks to the symmetry of $H$, we will use
\begin{align}
\label{Omegak}
\Omega_k=\Big\{z=(z' ,z_3)\in\R^2\times\R:\Big\langle\frac{z' }{|z' |},\frac{x'^{k}}{|x'^k|}\Big\rangle\geq\cos\frac{\pi}{\ell}\Big\},\quad k=1,2,\cdots\ell.
\end{align}
 Therefore, for $\ell$ large,
\begin{align}\label{eqs3.6}
\begin{split}
&
\quad \Bigl|\int_{\R^3} \Big(\sum\limits_{k=1}^\ell U_{S^k}^3-U_r^3\Big)\varphi dx\Bigr|= \ell \Bigl|\int_{\Omega_1} \Big(\sum\limits_{k=1}^\ell  U_{S^k}^3 - U_r^3\Big)\varphi dx\Bigr|\\
&\leq C \ell \int_{\Omega_1}\Bigl(U_{S^1}^2\sum\limits_{k=2}^\ell U_{S^k}+ U_{S^1}\sum\limits_{k=2}^\ell  U_{S^k}^2\Bigr)|\varphi|dx\\
&\leq C\ell \sum\limits_{k=2}^\ell \frac{e^{-|S^1-S^k|}}{|S^1-S^k|}\|\varphi\|_{L^2(\O_1)}\\
& \leq C e^{-\frac{2\pi r}{\ell}}\frac{\ell\sqrt{\ell}}{r}\|\varphi\|_{L^2}.
\end{split}
\end{align}
We used the exponential decay at infinity of $U$ by \eqref{wy1}, $r \sim \ell\ln \ell$;  $\|\varphi\|_{L^2} = \sqrt{\ell}\|\varphi\|_{L^2(\Omega_1)}$, $|S^1 - S^k| \geq |S^1 - S^2|$, and
\begin{align*}
\sum\limits_{k=2}^{\ell}e^{-|S^k-S^1|}\leq Ce^{-\frac{2\pi r}{\ell}}.
\end{align*}
For the last estimate, see Lemma 3.5 and Corollary 3.6 in \cite{D-W-Y}. Here and after, $\|\cdot\|_{L^p}$ means always $\|\cdot\|_{L^p(\R^3)}$. Similarly, there holds
\begin{eqnarray}\label{eqs3.7}
\Big|\int_{\R^3}\Bigl(\sum\limits_{k=1}^\ell V_{S^k}^3-V_r^3\Bigr)\psi dx\Big|
\leq C e^{-\frac{2\pi r}{\ell}}\frac{\ell\sqrt{\ell}}{r}\|\psi\|_{L^2}.
\end{eqnarray}
Using $(U,V)=(\alpha W^*,\gamma W^*)$, there holds also, for $\ell$ large,
\begin{align}\label{eqs3.8}
\begin{split}
& \quad \Bigl|\int_{\R^3}\Bigl(\sum\limits_{k=1}^\ell V_{S^k}^2U_{S^k}-V_r^2U_r\Bigr)\varphi dx + \int_{\R^3}\Bigl(\sum\limits_{k=1}^\ell U_{S^k}^2V_{S^k}-U_r^2V_r\Bigr)\psi dx\Bigr|\\
& \leq C \int_{\R^3}\Big(\sum\limits_{i\neq k}V_{S^i}^2U_{S^k}|\varphi|+\sum\limits_{i\neq k}V_{S^i}U^2_{S^k}|\psi|\Big) dx
\\
& \leq
 C e^{-\frac{2\pi r}{\ell}}\frac{\ell\sqrt{\ell}}{r}\big(\|\varphi\|_{L^2}+ \|\psi\|_{L^2}\big).
\end{split}
\end{align}
Using similar estimates and the symmetry of $W_\rho$, we obtain
\begin{align}\label{eqs3.10}
\begin{split}
\Big|\int_{\R^3}U_r W_\rho^2\xi dx\Big| &
= \ell \Big|\int_{\O_1}\Bigl(\sum\limits_{k=1}^\ell U_{S^k}\Bigr)\Bigl(\sum\limits_{k=1}^\ell W_{T^k}\Bigr)^2\xi dx\Big|\\
&\leq C\ell \int_{\Omega_1}\Big[U_{S^1}W_{T^1}^2 + W_{T^1}^2\sum\limits_{k=2}^\ell U_{S^k}+U_{S^1}\sum\limits_{k=2}^{\ell -1}W_{T^k}^2\Big] |\xi| dx \\
&\leq C\ell \int_{\Omega_1}\Big(W_{T^1}\frac{e^{-|S^1-T^1|}}{|S^1-T^1|} +U_{S^1}\sum\limits_{k=2}^{\ell-1} e^{-|S^k-T^1|}\Big)  |\xi| dx\\
&\leq C\ell \frac{e^{-|S^1 - T^1|}}{|S^1 - T^1|}\|\xi\|_{L^2(\Omega_1)}\\
& \leq C e^{-|\rho-re^{\frac{i\pi}{\ell}}|}\frac{\ell\sqrt{\ell}}{r}\|\xi\|_{L^2}.
\end{split}
\end{align}
Similarly,
\begin{align}\label{eqs3.9}
\Big|\int_{\R^3}U_r^2W_\rho\xi dx\Big| \leq C  e^{-|\rho-re^{\frac{i\pi}{\ell}}|}\frac{\ell\sqrt{\ell}}{r}\|\xi\|_{L^2}.
\end{align}
Combining \eqref{eqs3.5}, \eqref{eqs3.6}-\eqref{eqs3.9}, we conclude the estimate for $L_1(\varphi,\psi,\xi)$, hence for $\|L_1\|$.
\end{proof}

By similar considerations, we can claim the following estimate for $L_2$.
\begin{lemma}\label{lm2.1}
There is a constant $C>0$, independent of $\ell$ large, such that for any $r\in D_\ell$,
$$
\big| L_2(\varphi, \psi, \xi) \big|\leq C\|(\varphi, \psi, \xi) \|^2,\quad  \forall \; \|(\varphi, \psi, \xi) \| \leq 1.
$$
\end{lemma}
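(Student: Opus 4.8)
The plan is to exploit the uniform-in-$\ell$ boundedness of the ansatz profiles in $L^\infty$, together with Cauchy--Schwarz and the equivalence of $\|\cdot\|_{P_j}$ with the standard $H^1(\R^3)$-norm; the argument is essentially the same as (and simpler than) the one behind Lemma \ref{lm2.4}. First I would record that there is a constant $C_0>0$, independent of $\ell$ large and of $(r,\rho)\in D_\ell$, with
$$\|U_r\|_\infty + \|V_r\|_\infty + \|W_\rho\|_\infty \le C_0.$$
Indeed $U_r=\sum_{k=1}^\ell U_{S^k}$, the peaks $S^k$ lie on a circle of radius $r\sim\ell\ln\ell$ with angular spacing $2\pi/\ell$, so $|S^j-S^k|\ge|S^1-S^2|$ is comparable to $\ln\ell$ for neighbouring indices; by the exponential decay \eqref{wy1} of $W^*$, at any point $x$ the sum $U_r(x)$ is dominated by its largest term plus a tail of the form $C\sum_{k\ge 2}e^{-|S^1-S^k|}\le Ce^{-2\pi r/\ell}$, which is bounded uniformly in $\ell$; the same holds for $V_r$ and $W_\rho$. (This bound was already used, without comment, in the proofs of Lemmas \ref{lm2.4} and \ref{lm2.5}.)

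Since $L_2$ is a quadratic form in $(\varphi,\psi,\xi)$, it then suffices to bound each of its terms. The term $\|(\varphi,\psi,\xi)\|^2$ is already of the required form. For the ``square'' terms, the bound $\int_{\R^3}U_r^2\varphi^2\,dx\le\|U_r\|_\infty^2\|\varphi\|_{L^2}^2\le C_0^2\|\varphi\|_{P_1}^2\le C_0^2\|(\varphi,\psi,\xi)\|^2$ — using that $\|\cdot\|_{P_1}$ is equivalent to the $H^1$-norm, hence controls $\|\cdot\|_{L^2}$ — and likewise for the terms with $V_r^2\psi^2$, $W_\rho^2\xi^2$, $U_r^2\psi^2$, $V_r^2\varphi^2$, $U_r^2\xi^2$, $W_\rho^2\varphi^2$, $V_r^2\xi^2$, $W_\rho^2\psi^2$. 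For the mixed terms I would use Cauchy--Schwarz followed by Young's inequality, for instance
$$\Bigl|\int_{\R^3}U_rV_r\,\varphi\psi\,dx\Bigr| \le \|U_r\|_\infty\|V_r\|_\infty\|\varphi\|_{L^2}\|\psi\|_{L^2} \le C\|(\varphi,\psi,\xi)\|^2,$$
and similarly for $\int U_rW_\rho\varphi\xi\,dx$ and $\int V_rW_\rho\psi\xi\,dx$. Summing the finitely many contributions, with a constant depending only on $\mu_1,\mu_2,\mu_3$, $\beta_{12},\beta_{13},\beta_{23}$ and $C_0$ — none of which depend on $\ell$ — yields $|L_2(\varphi,\psi,\xi)|\le C\|(\varphi,\psi,\xi)\|^2$ for every $(\varphi,\psi,\xi)\in H$, in particular on the unit ball.

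There is no real obstacle here: the only point needing mild care is the uniform-in-$\ell$ $L^\infty$ bound on the sums of $\ell$ translated bumps, and this rests on exactly the separation and decay facts already invoked in Lemmas \ref{lm2.4} and \ref{lm2.5} (e.g.\ the estimate $\sum_{k=2}^\ell e^{-|S^k-S^1|}\le Ce^{-2\pi r/\ell}$ borrowed from \cite{D-W-Y}); everything else is Hölder/Cauchy--Schwarz plus the norm equivalence. One may also note that the hypothesis $\|(\varphi,\psi,\xi)\|\le1$ is not actually used, $L_2$ being $2$-homogeneous.
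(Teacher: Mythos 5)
Your argument is correct and is essentially the paper's own: the paper disposes of Lemma \ref{lm2.1} "by similar considerations" to Lemma \ref{lm2.4}, namely the uniform-in-$\ell$ $L^\infty$ bound on $U_r, V_r, W_\rho$ (coming from the exponential decay of $W^*$ and the $\sim\ln\ell$ separation of the peaks), Cauchy--Schwarz, and the equivalence of the weighted norms with $H^1$. Your write-up just makes those steps explicit, including the correct observation that the restriction $\|(\varphi,\psi,\xi)\|\le 1$ is not needed since $L_2$ is $2$-homogeneous.
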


\section{A key lemma}
Here we will prove that for $\ell$ large enough, the quadratic form $L_2$ is uniformly coercive over the space of perturbation $E$, defined in \eqref{spaceE}.
\begin{lemma}\label{lm2.2}
Let $\beta_{12}\in\Lambda$ in \eqref{lambda}. There is a constant $\delta >0$, independent of $\ell$ large, such that for any $(r, \rho) \in D_\ell$,
$$
L_2(\varphi,\psi,\xi)\geq \delta \|(\varphi,\psi,\xi)\|^2,\quad \forall\; (\varphi,\psi,\xi)\in E.
$$
\end{lemma}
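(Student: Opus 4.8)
The plan is to argue by contradiction, following the standard Lyapunov–Schmidt strategy: suppose the coercivity fails, extract a sequence of normalized perturbations whose quadratic form tends to zero, and show that such a sequence must converge (after translating to a single bump) to an element of the kernel of a limiting linearized operator — but the orthogonality constraints defining $E$ force that limit to be zero, a contradiction. Concretely, suppose for each $\ell$ there is $(\varphi_\ell,\psi_\ell,\xi_\ell)\in E$ with $\|(\varphi_\ell,\psi_\ell,\xi_\ell)\|=1$ and $L_2(\varphi_\ell,\psi_\ell,\xi_\ell)\le \ell^{-1}$ (say). Since $\beta_{13},\beta_{23}$ are small, the cross terms in $L_2$ involving $W_\rho$ and $U_r,V_r$ together with those involving the far-away bumps are $o(1)$ by the same exponential-decay estimates used in Lemmas \ref{lm2.5} and \ref{lm2.1}; so it suffices to control the "diagonal" quadratic form
\begin{align*}
Q_\ell(\varphi,\psi,\xi) &= \|(\varphi,\psi,\xi)\|^2 - 3\mu_1\!\int U_r^2\varphi^2 - 3\mu_2\!\int V_r^2\psi^2 - 3\mu_3\!\int W_\rho^2\xi^2 \\
&\quad - \beta_{12}\!\int\!\big(U_r^2\psi^2 + 4U_rV_r\varphi\psi + V_r^2\varphi^2\big),
\end{align*}
which decouples into a $(\varphi,\psi)$-part localized near the peaks $S^k$ and a $\xi$-part localized near the peaks $T^k$.

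The core step is a localization argument. By the symmetry encoded in $H_{V,s}$ it is enough to work on one fundamental domain $\Omega_1$ (as in \eqref{Omegak}) for the $(\varphi,\psi)$ pair and on the analogous domain for $\xi$; the mass of each component is $\sqrt{\ell}$ times its mass on $\Omega_1$. Writing $\tilde\varphi_\ell(x) = \varphi_\ell(x+S^1)$, $\tilde\psi_\ell(x)=\psi_\ell(x+S^1)$, $\tilde\xi_\ell(x)=\xi_\ell(x+T^1)$, the potentials $U_r^2, V_r^2$ restricted to $\Omega_1$ converge (because the other bumps are exponentially far, and $r\to\infty$ so $P_j\to 1$) to $U^2=\alpha^2(W^*)^2$, $V^2=\gamma^2(W^*)^2$ respectively, and similarly $W_\rho^2\to W^2 = (W^*)^2/\mu_3$ near $T^1$. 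One then shows these translated sequences are bounded in $H^1$, hence converge weakly to some $(\tilde\varphi_\infty,\tilde\psi_\infty)$ and $\tilde\xi_\infty$; a concentration-compactness / no-vanishing argument (the quadratic form cannot be small if the mass escapes to infinity, since then $Q_\ell\to 1$) upgrades this to strong $H^1_{\rm loc}$ convergence with no loss of mass. Passing to the limit, $L_2\to 0$ forces
\begin{align*}
\int_{\R^3}\big(|\nabla\tilde\varphi_\infty|^2 + \tilde\varphi_\infty^2\big) + \int_{\R^3}\big(|\nabla\tilde\psi_\infty|^2 + \tilde\psi_\infty^2\big) &= 3\mu_1\!\int U^2\tilde\varphi_\infty^2 + 3\mu_2\!\int V^2\tilde\psi_\infty^2 \\
&\quad + \beta_{12}\!\int\!\big(U^2\tilde\psi_\infty^2 + 4UV\tilde\varphi_\infty\tilde\psi_\infty + V^2\tilde\varphi_\infty^2\big),
\end{align*}
which says exactly that the bilinear form of the linearized system \eqref{eqs1.3} at $(U,V)$ is $\le 0$ on $(\tilde\varphi_\infty,\tilde\psi_\infty)$; combined with the constraint $\int (W^*)^2(X_1\tilde\varphi_\infty + Y_1\tilde\psi_\infty)\,dx = 0$ inherited from $E$ (and the evenness in $x_2,x_3$), the nondegeneracy result of Peng–Wang recalled in the introduction — valid precisely because $\beta_{12}\in\Lambda$ — guarantees $(\tilde\varphi_\infty,\tilde\psi_\infty) = 0$. (The orthogonality to $\partial_j W^*$ for $j=2,3$ comes free from evenness, and to $j=1$, i.e.\ the radial translation direction, from the constraint; so the limit avoids the whole kernel.) Likewise, for the scalar part, $\tilde\xi_\infty$ satisfies $\int(|\nabla\tilde\xi_\infty|^2+\tilde\xi_\infty^2) = 3\mu_3\int W^2\tilde\xi_\infty^2$ with $\int (W^*)^2 Z_1\tilde\xi_\infty = 0$, and the nondegeneracy of the Wei–Yan / Kwong solution $W$ (its kernel being spanned by $\partial_j W$) together with evenness kills $\tilde\xi_\infty$ too. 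But then all translated sequences converge strongly to $0$, contradicting $\|(\varphi_\ell,\psi_\ell,\xi_\ell)\| = 1$.

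The main obstacle I anticipate is making the "no mass escaping / strong local convergence" step rigorous in the weighted setting with moving centers $S^k, T^k$ whose mutual distances grow like $r/\ell \sim \ln\ell$: one must be careful that the interaction between neighboring bumps, although exponentially small in $r/\ell$, is summed over $\ell$ bumps, and check (using the summation bound $\sum_{k\ge 2} e^{-|S^k-S^1|} \le C e^{-2\pi r/\ell}$ already invoked, and $r \ge (\tfrac{m_3}{2\pi}-\delta)\ell\ln\ell$) that these contributions are genuinely $o(1)$ uniformly — and similarly for the $U_r$–$W_\rho$ cross terms, where one uses $|\beta_{13}|+|\beta_{23}| < \delta$ small. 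A secondary technical point is ensuring the limit functions lie in the correct symmetry-reduced space so that the nondegeneracy statements (which are about the full $H^1(\R^3)^2$ kernel) can be applied after quotienting by the symmetry; this is where the evenness in $x_2, x_3$ does the bookkeeping, leaving only the single radial-derivative direction to be excluded by the explicit orthogonality condition in $E$.
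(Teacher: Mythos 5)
Your overall strategy coincides with the paper's: argue by contradiction, translate to a single bump, identify the limiting linearized problems for $(U,V)$ and for $W$, and use the Peng--Wang nondegeneracy for $\beta_{12}\in\Lambda$ (resp.\ the nondegeneracy of $W$) together with evenness in $x_2,x_3$ and the single orthogonality constraint in $E$ to force the limits to vanish. The gap is in how you close the contradiction. With your normalization $\|(\varphi_\ell,\psi_\ell,\xi_\ell)\|=1$, the restriction of each component to one fundamental domain has norm of order $\ell^{-1/2}$, so the translated sequences converge to zero for trivial reasons, and the claimed ``strong $H^1_{\rm loc}$ convergence with no loss of mass'' is false: after translating to $S^1$ (or $T^1$), all but an $O(1/\ell)$ fraction of the mass lies in the other $\ell-1$ cells, so ``limit $=0$'' does not contradict ``norm $=1$''. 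The paper instead normalizes $\|(\varphi_\ell,\psi_\ell,\xi_\ell)\|^2_{H^1}=\ell$, so that each cell carries unit mass (see \eqref{eqswqf3.2}); and even then no-loss-of-mass is neither available nor needed, since mass may drift into the valleys or to infinity inside the unbounded cell $\Omega_1$ --- no concentration-compactness step appears, nor would one along the lines you sketch go through.

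What actually closes the argument is the observation you relegate to a parenthesis: once the local limits near $S^1$ and $T^1$ vanish, one re-inserts this into $L_2$ and bounds it from below. On $\Omega_1$ the functions $U_r,V_r$ are bounded by $Ce^{-|x-S^1|/2}$ and on $\widetilde\Omega_1$ one has $W_\rho\leq Ce^{-|x-T^1|/2}$ (see \eqref{wy3-4}), so the negative $\mu_j$- and $\beta_{12}$-terms contribute only $o(\ell)+o(e^{-R})\ell$; by contrast, the $\beta_{13},\beta_{23}$ terms such as $\int W_\rho^2\varphi_\ell^2$ are \emph{not} exponentially small (they see the mass of $\varphi_\ell$ near the $T^k$) --- your claim that they are $o(1)$ ``by exponential decay'' is inaccurate; they are only controlled by $C(|\beta_{13}|+|\beta_{23}|)\ell$, which is precisely where the smallness of the extra couplings enters. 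This gives $o(1)\ell\geq \ell-C(|\beta_{13}|+|\beta_{23}|)\ell+o(e^{-R})\ell$, the paper's estimate \eqref{eqs3.4}, hence the contradiction. So your outline needs two repairs: renormalize per cell, and replace the no-vanishing/strong-convergence step by this direct lower bound on $L_2$.
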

The above estimate is a crucial argument for our setting. In fact, let us denote by $\L$ the linear endomorphism of $E$, associated to $L_2$, i.e. for any $(\varphi,\psi,\xi), (\varphi',\psi',\xi') \in E$,
$$\big\langle \L(\varphi,\psi,\xi), (\varphi',\psi',\xi')\big\rangle_H = \frac{L_2(\varphi+\varphi', \psi+\psi',\xi+\xi') - L_2(\varphi,\psi,\xi) - L_2(\varphi',\psi',\xi')}{2}.$$
Lemma \ref{lm2.2} means that $\L$ is uniformly invertible for $\ell$ large.

\begin{proof}
Suppose the contrary, there exist $\ell\to \infty$, $(r_\ell, \rho_\ell) \in D_\ell$ and $(\varphi_\ell,\psi_\ell,\xi_\ell)\in E$ satisfying
\begin{align}\label{wy3-1}
L_2(\varphi_\ell,\psi_\ell,\xi_\ell) =o(1)\|(\varphi_\ell,\psi_\ell,\xi_\ell)\|^2.
\end{align}
Without loss of generality, we may assume $\|(\varphi_\ell,\psi_\ell,\xi_\ell)\|^2_{H^1(\R^3)}=\ell$. Notice that due to the limiting problem, here we prefer to normalize with the standard $H^1$ norm instead of norm in $H$, even they are equivalent. For simplicity, we erase the index $\ell$ for $ r_\ell$. For $1 \leq k \leq \ell$, let $\O_k$ be given by \eqref{Omegak} and
\begin{align}
\label{tOmegak}
\widetilde{\Omega}_k=\Big\{z=(z' ,z_3)\in\R^2\times\R:\Big\langle\frac{z' }{|z' |},\frac{y'^k}{|y'^k|}\Big\rangle\geq\cos\frac{\pi}{\ell}\Big\}.
\end{align}
By symmetry,  there holds
\begin{eqnarray}\label{eqswqf3.2}
\|(\varphi_\ell,\psi_\ell,\xi_\ell)\|^2_{H^1(\Omega_1)}= \|(\varphi_\ell,\psi_\ell,\xi_\ell)\|^2_{H^1(\widetilde \Omega_1)} = 1.
\end{eqnarray}
With abuse of notation, we denote still by $\L$ the linear endomorphism associated to $L_2$ over $H$. For any $(g,h,f)\in H$, we have
\begin{align}
\label{eqs3.1}
\begin{split}
J_\ell & = \frac{1}{\ell} \langle \L(\varphi_\ell,\psi_\ell,\xi_\ell),(g,h,f) \rangle_H\\ & = \int_{\Omega_1}\Bigl(\nabla\varphi_\ell\nabla g+P_1 \varphi_\ell g-3\mu_1 U_r^2g\varphi_\ell\Bigr)dx
+\int_{\Omega_1}\Bigl(\nabla\psi_\ell\nabla h+P_2 h\psi_\ell-3\mu_2 V_r^2h\psi_\ell\Bigr)dx\\
& \quad +\int_{\Omega_1}\Bigl(\nabla\xi_\ell\nabla f+P_3 f\xi_\ell-3\mu_3 W_\rho^2f\xi_\ell\Bigr)dx\\
&
\quad -\beta_{12}\int_{\Omega_1}\Bigl(U_r^2h\psi_\ell+V_r^2g\varphi_\ell+2U_r V_r g\psi_\ell+2U_r V_rh\varphi_\ell\Bigr)dx\\
& \quad
-\beta_{13}\int_{\Omega_1}\Bigl(U_r^2f\xi_\ell+W_\rho^2g\varphi_\ell+2U_r W_\rho g\xi_\ell+2U_r W_\rho f\varphi_\ell\Bigr)dx\\
& \quad
-\beta_{23}\int_{\Omega_1}\Bigl(V_r^2f\xi_\ell+W_\rho^2h\psi_\ell+2V_r W_\rho h\xi_\ell+2V_r W_\rho f\psi_\ell \Bigr)dx
\\
& \quad
= o\Big(\frac{1}{\sqrt{\ell}}\Big)\|(g,h,f)\|.
\end{split}
\end{align}
Let
$$
\overline{\varphi}_\ell=\varphi_\ell(x-S^1),\ \ \overline{\psi}_\ell=\psi_\ell(x-S^1),\ \ \overline{\xi}_\ell=\xi_\ell(x-T^1).
$$
Given any $R>0$, we see that $B_R(S^1)\subset\Omega_1$ for $\ell$ large since $r_\ell \sim \ell\ln \ell$. Thus \eqref{eqswqf3.2}
implies that for $\ell$ large enough,
\begin{align*}
\left\|\overline{\varphi}_\ell\right\|^2_{H^1(B_R)} + \left\|\overline{\psi}_\ell\right\|^2_{H^1(B_R)} + \left\|\overline{\xi}_\ell\right\|^2_{H^1(B_R)}\leq 1.
\end{align*}
Up to a subsequence, we may assume the existence of $\varphi,\psi, \xi\in H^1(\R^3)$ such that as $\ell$ tends to infinity,
\begin{align}
\label{limit1}
(\overline{\varphi}_\ell, \overline{\psi}_\ell, \overline{\xi}_\ell) \longrightarrow (\varphi, \psi, \xi) \ \ \mbox{weakly in }\  H_{loc}^1(\R^3)^3 \ \text{ and strongly in }\   L_{loc}^2(\R^3)^3.
\end{align}
Moreover, $\varphi$ and $\psi$ are even in $x_2, x_3$; satisfy
\begin{align}\label{wy2}
\int_{\R^3}W^{*2}\Big(\frac{\partial U}{\partial x_1}\varphi+\frac{\partial V}{\partial x_1}\psi \Big)dx=0.
\end{align}
To get the above orthogonality condition, we used $(\varphi_\ell, \psi_\ell, \xi_\ell) \in E$, the $L^2_{loc}$ convergence of $(\varphi_\ell, \psi_\ell, \xi_\ell)$, also the exponential decay at infinity of $W^*$ and its derivatives.

\smallskip
We claim that $(\varphi,\psi)$ satisfies the linearized equation for $(U, V)$, that is
\begin{eqnarray}\begin{cases}\label{eqs3.3}
 -\Delta \varphi + \varphi-3\mu_1 U^2\varphi-\beta_{12}V^2\varphi-2\beta_{12}UV\psi=0 ,\cr
 -\Delta \psi + \psi-3\mu_2 V^2\psi-\beta_{12}U^2\psi-2\beta_{12}UV\varphi=0
 \end{cases}
\end{eqnarray}
in $\R^3$.
Let $(g,h)\in C_0^\infty(\R^3)^2$,
and $(g_\ell,h_\ell,0)=(g_{S^1},h_{S^1},0)$, so $(g_\ell, h_\ell)\in C_0^\infty(B_R(S^1))^2$ for some $R > 0$.
It's not difficult to see that
$$
\int_{\Omega_1}\Bigl(\nabla\varphi_\ell\nabla g_\ell+g_\ell\varphi_\ell
-3\mu_1 U_r^2g_\ell\varphi_\ell\Bigr)dx\longrightarrow\int_{\R^3}\Bigl(\nabla\varphi\nabla g+g\varphi-3\mu_1 U^2g\varphi\Bigr)dx,
$$
$$
\int_{\Omega_1}\Bigl(\nabla\psi_\ell\nabla h_\ell+h_\ell\psi_\ell
-3\mu_2 V_r^2h_\ell\psi_\ell\Bigr)dx\longrightarrow\int_{\R^3}\Bigl(\nabla\psi\nabla h+h \psi-3\mu_2 V^2h\psi\Bigr)dx.
$$
Similarly, there hold
\begin{align*}
& \quad \quad \int_{\Omega_1}\Bigl(U_r^2h_\ell\psi_\ell+V_r^2g_\ell\varphi_\ell+2U_r V_r g_\ell\psi_\ell
+2U_r V_rh_\ell\varphi_\ell\Bigr)dx\cr
&\longrightarrow \int_{\R^3}\Bigl(U^2h\psi+V^2g\varphi+2U V g\psi+2U Vh\varphi\Bigr)dx,
\end{align*}
and
\begin{align*}
-\beta_{13}\int_{\Omega_1}\Bigl(W_\rho^2g\varphi_\ell+2U_r W_\rho g\xi_\ell \Bigr)dx
-\beta_{23}\int_{\Omega_1}\Bigl(W_\rho^2h\psi_\ell+2V_r W_\rho h\xi_\ell\Bigr)dx \longrightarrow 0.
\end{align*}
Combining with \eqref{eqs3.1}, we get
\begin{align}\label{eqs3.2}
\begin{split}
& \quad
\int_{\R^3}\Bigl(\nabla\varphi\nabla g+g\varphi-3\mu_1 U^2g\varphi\Bigr)dx+\int_{\R^3}\Bigl(\nabla\psi\nabla h+h\psi-3\mu_2 V^2h\psi\Bigr)dx\\
& -\beta_{12}\int_{\R^3}\Bigl(U^2h\psi+V^2g\varphi+2U V g\psi+2UVh\varphi\Bigr)dx=0.
\end{split}
\end{align}
By the density of $C_0^\infty(\R^3)$ in $H^1(\R^3)$, \eqref{eqs3.2} holds true for all $(g, h) \in H^1(\R^3)^2$, which means that  \eqref{eqs3.3} is satisfied in the weak sense.

\smallskip
Since $(U,V)$ is nondegenerate, and we work with functions even in $ x_2,x_3$, the kernel of \eqref{eqs3.3} is given by $(\frac{\alpha}{\gamma}\frac{\partial W^*}{\partial x_1},\frac{\partial W^*}{\partial x_1})$. In other words, $(\varphi,\psi) = c(\frac{\partial U}{\partial x_1},\frac{\partial V}{\partial x_1})$ for some $c \in \R$. Applying \eqref{wy2}, we deduce that $(\varphi,\psi)=(0,0)$. Consequently, for any $R > 0$,
\begin{align}\label{wy3-2}
\lim_{\ell \to \infty} \int_{B_R(S^1)}(\varphi_\ell^2+\psi_\ell^2)dx = 0.
\end{align}

By the same, let $f \in C_0^\infty(\R^3)$ and $(0,0,f_{T_1})$ be the test function in \eqref{eqs3.1}, using \eqref{limit1},
we can claim that $
-\Delta \xi + \xi-3\mu_3 W^2\xi=0$ in $\R^3$, and $\langle W_{T^1}^{*2}Z_1, \xi \rangle_{L^2(\R^3)} =0$.
With the nondegeneracy of $W$, we find $ \xi=0$ so that
\begin{align}\label{wy3-3}
\lim_{\ell \to \infty} \int_{B_R(T^1)} \xi_\ell^2dx = 0, \quad \forall\; R > 0.
\end{align}
Furthermore, there holds
\begin{align}\label{wy3-4}
U_{r}\leq Ce^{-\frac{|x-S^1|}{2}},\; V_{r}\leq Ce^{-\frac{|x-S^1|}{2}}\ \ \mbox{in }\; \Omega_1; \quad
W_{\rho}\leq Ce^{-\frac{|x-T^1|}{2}}\ \ \mbox{in }\; \widetilde{\Omega}_1.
\end{align}
From \eqref{wy3-1}, \eqref{wy3-2}--\eqref{wy3-4} and the symmetry, we see that
\begin{align}\label{eqs3.4}
\begin{split}
o(1)\ell
&=
\|(\varphi_\ell,\psi_\ell,\xi_\ell)\|^2-\ell\int_{\Omega_1}\Big(3\mu_1 U_r^2\varphi_\ell^2+3\mu_2 V_r^2\psi_\ell^2\Big)dx
-3\mu_3 \ell\int_{\widetilde{\Omega}}W_\rho^2\xi_\ell^2dx
\\
&\quad
-\beta_{12}\ell\int_{\Omega_1}\Big( U_r^2\psi_\ell^2+V_r^2\varphi_\ell^2+4U_r V_r \varphi_\ell \psi_\ell \Big)dx\\
& \quad -\beta_{13}\int_{\R^3}\Big( U_r^2\xi_\ell^2+W_\rho^2\varphi_\ell^2+4U_r W_\rho \varphi_\ell\xi_\ell \Big)dx
\\
& \quad
-\beta_{23}\int_{\R^3}\Big( V_r^2\xi_\ell^2+W_\rho^2\psi_\ell^2+4V_r W_\rho \psi_\ell\xi_\ell \Big)dx
\\
&=
\ell+o(\ell)+o\Big(e^{-R}\Big) \times \int_{\Omega_1} \Big( \varphi_\ell^2+\psi_\ell^2 \Big)dx
+o\Big(e^{-R} \Big) \times \int_{\widetilde{\Omega}_1}\xi_\ell^2dx\\
&
\quad +O\Big(e^{-|S^1-T^1|}\Big)\|\varphi_\ell\|_{P_1}\|\xi_\ell\|_{P_3}-\beta_{13}\int_{\R^3}\Big(U_r^2\xi_\ell^2+W_\rho^2\varphi_\ell^2\Big)dx\\
& \quad -\beta_{23}\int_{\R^3}(V_r^2\xi_\ell^2+W_\rho^2\psi_\ell^2)dx
\\
&\geq
\ell-C(|\beta_{13}|+|\beta_{23}|)\ell+o(e^{-R})\ell.
\end{split}
\end{align}
Here we used
\begin{align*}
\int_{\R^3}\Big(U_r^2\xi_\ell^2+W_\rho^2\varphi_\ell^2\Big)dx + \int_{\R^3}\Big(V_r^2\xi_\ell^2+W_\rho^2\psi_\ell^2\Big)dx \leq C\int_{\R^3} \Big(\xi_\ell^2+\varphi_\ell^2 \Big)dx.
\end{align*}
However, if $|\beta_{13}|+|\beta_{23}|$ is small enough, the estimate \eqref{eqs3.4} is impossible for large $R$ and $\ell$. Thus we reach a contradiction, so the hypothesis was wrong, hence the lemma holds true.
\end{proof}

\section{Energy expansion for ansatz}
Here we will expand the energy $I(U_r,V_r,W_\rho)$.
Recall that
\begin{align*}
 I(u,v,w)
 &= \frac{\|(u,v,w)\|^2}{2} -
\frac{1}{4}\ds\int_{\R^3}(\mu_1 u^4+\mu_2 v^4+\mu_3w^4)dx
\\
& \quad -
\frac{1}{2}\ds\int_{\R^3}(\beta_{12}u^2v^2+\beta_{13}u^2w^2+\beta_{23}v^2w^2)dx.
\end{align*}
\begin{proposition} \label{A.1}
Let $M > 0$, there holds, for any $\ell$ be large enough and $(r, \rho) \in D_\ell$,
\begin{align*}
I(U_r,V_r,W_\rho)
&=
\ell A_0+\ell\Bigl(\frac{a_1\alpha^2}{r^{m_1}}+\frac{a_2\gamma^2}{r^{m_2}}+\frac{a_3}{\mu_3 \rho^{m_3}}\Bigr)A_1-(\beta_{12}A_2+A_3)e^{-\frac{2\pi r}{\ell}}\frac{\ell^2}{r}- A_4 e^{-\frac{2\pi \rho}{\ell}}\frac{\ell^2}{\rho}
\\
&\quad
- (|\beta_{13}| + |\beta_{23}|)o\Big(e^{-|\rho-re^{\frac{i\pi}{\ell}}|}\frac{\ell^2}{r}\Big) +O\Bigl(\frac{\ell^{1 - \sigma}}{r^{\min(m_1, m_2)}}+\frac{\ell^{1 - \sigma}}{\rho^{m_3}}\Bigr) + O\Big(\ell e^{-\frac{3\pi r}{\ell}}\Big)
\end{align*}
where
\begin{align}
\label{A01}
A_0=\left[\frac{\mu_1+\mu_2-2\beta_{12}}{4(\mu_1\mu_2-\beta_{12}^2)} +\frac{1}{4\mu_3}\right] \int_{\R^3}W^{*4}dx, \quad A_1=\frac{1}{2}\ds\int_{\R^3}W^{*2}dx
\end{align}
and $A_2, A_3, A_4$ are also positive constants dependent only on $\mu_i$, $\alpha$, $\gamma$ and $W^*$.
\end{proposition}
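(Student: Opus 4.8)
The plan is to substitute the ansatz $(U_r,V_r,W_\rho)$ into $I$, peel off the leading $\ell$-independent bump energy $\ell A_0$, then the potential corrections coming from $(P)$ and the leading nearest-neighbour interactions on the two lattices $\{S^k\}$ and $\{T^k\}$, and finally to bound all remaining terms uniformly for $(r,\rho)\in D_\ell$. The key is to use the limiting equations: since $(U,V)$ solves \eqref{eqs1.3} with unit potentials and $W$ solves $-\Delta W+W=\mu_3W^3$, one has $-\Delta U_{S^k}+U_{S^k}=\mu_1U_{S^k}^3+\beta_{12}V_{S^k}^2U_{S^k}$, and analogously for $V$ and $W$. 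Writing $\tfrac12\|(U_r,V_r,W_\rho)\|^2$ as the ``flat'' Dirichlet-plus-$L^2$ energy plus $\tfrac12\int[(P_1-1)U_r^2+(P_2-1)V_r^2+(P_3-1)W_\rho^2]$ and inserting these identities turns every quadratic, quartic and coupling integral into a double sum over $1\le k,j\le\ell$. Keeping the diagonal part $k=j$ and using $U=\alpha W^*$, $V=\gamma W^*$, $W=W^*/\sqrt{\mu_3}$, these collapse to $\ell\big[\tfrac14\int(\mu_1U^4+\mu_2V^4+\mu_3W^4)+\tfrac12\beta_{12}\int U^2V^2\big]=\ell A_0$ with $A_0$ as in \eqref{A01}, the last equality being the elementary identity $\mu_1\alpha^4+\mu_2\gamma^4+2\beta_{12}\alpha^2\gamma^2=(\mu_1+\mu_2-2\beta_{12})/(\mu_1\mu_2-\beta_{12}^2)$, a consequence of \eqref{ag}.

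For the potential terms I would use the symmetry of $H$ to write $\tfrac12\int(P_1-1)U_r^2=\tfrac\ell2\int\big(P_1(y+S^1)-1\big)U^2(y)\,dy$; since $|S^1|=r\to\infty$ and $U$ decays exponentially, only $|y|=O(1)$ contributes, so expanding $P_1$ by $(P)$ together with $|y+S^1|^{-m_1}=r^{-m_1}(1+O(|y|/r))$ — the linear term being odd, hence killed by the radial weight $U^2$ — yields $\tfrac{\ell a_1\alpha^2}{r^{m_1}}A_1+O(\ell^{1-\sigma}r^{-m_1})$ with $A_1=\tfrac12\int W^{*2}$, and similarly for $V_r,W_\rho$ the terms $\tfrac{\ell a_2\gamma^2}{r^{m_2}}A_1$ and $\tfrac{\ell a_3}{\mu_3\rho^{m_3}}A_1$. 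For the interactions inside the $(U_r,V_r)$-block, each off-diagonal term is, after the above substitutions, a combination with explicit coefficients in $\mu_i,\alpha,\gamma,\beta_{12}$ of integrals $\int(W^*)^3_{S^k}W^*_{S^j}$ with $k\neq j$; from \eqref{wy1} one has $\int(W^*)^3(\cdot)W^*(\cdot-\zeta)=B|\zeta|^{-1}e^{-|\zeta|}(1+O(|\zeta|^{-1}))$ for a universal $B>0$, and by symmetry together with the lattice-sum estimates of \cite{D-W-Y} the full sum is dominated by the two nearest neighbours, at mutual distance $2r\sin\tfrac{\pi}{\ell}=\tfrac{2\pi r}{\ell}+o(1)$, hence equals $(1+o(1))\tfrac{B}{\pi}\tfrac{\ell^2}{r}e^{-2\pi r/\ell}$; likewise for the $W_\rho$-block with $r$ replaced by $\rho$ and an extra factor $\mu_3^{-1}$. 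Collecting coefficients produces $-(\beta_{12}A_2+A_3)e^{-2\pi r/\ell}\tfrac{\ell^2}{r}-A_4e^{-2\pi\rho/\ell}\tfrac{\ell^2}{\rho}$ with $A_2=\tfrac{B}{\pi}\alpha^2\gamma^2$, $A_3=\tfrac{B}{2\pi}(\mu_1\alpha^4+\mu_2\gamma^4)$, $A_4=\tfrac{B}{2\pi\mu_3}$, all positive; moreover $\beta_{12}A_2+A_3=\tfrac{B}{2\pi}(\mu_1+\mu_2-2\beta_{12})/(\mu_1\mu_2-\beta_{12}^2)>0$ for every $\beta_{12}\in\Lambda$.

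It remains to bound the cross interactions and the remainder. The integrals $\int U_r^2W_\rho^2$ and $\int V_r^2W_\rho^2$ have no diagonal part since the lattices $\{S^k\}$ and $\{T^k\}$ are disjoint, and since $U^2,W^2\sim e^{-2|x|}$ each pair contributes $O(e^{-2|S^k-T^j|})$; summing over the two nearest pairs, at distance $|S^1-T^1|=|\rho-re^{i\pi/\ell}|$, gives $(|\beta_{13}|+|\beta_{23}|)\,O\big(\ell^2e^{-2|\rho-re^{i\pi/\ell}|}/r\big)=(|\beta_{13}|+|\beta_{23}|)\,o\big(e^{-|\rho-re^{i\pi/\ell}|}\ell^2/r\big)$. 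All other off-diagonal contributions — the $O(|\zeta|^{-1})$ corrections to the nearest-neighbour interactions, the non-nearest ones, and the higher-order cross products such as $\int U_{S^k}^2U_{S^j}^2$ — are $O(\ell e^{-3\pi r/\ell})$, while the accumulated potential errors are $O(\ell^{1-\sigma}r^{-\min(m_1,m_2)}+\ell^{1-\sigma}\rho^{-m_3})$; adding the pieces gives the claimed expansion, uniformly on $D_\ell$. \textbf{The main obstacle} is precisely this bookkeeping: one must check that on each lattice only the two nearest-neighbour interactions survive at order $\tfrac{\ell^2}{r}e^{-2\pi r/\ell}$ (resp. $\tfrac{\ell^2}{\rho}e^{-2\pi\rho/\ell}$), that the $S$--$T$ cross terms are genuinely of lower order because the interacting profiles appear squared, and that all the estimates are uniform on the fairly large box $D_\ell$; this rests on the sharp decay \eqref{wy1} and the summation estimates of \cite{D-W-Y}, just as in the single-equation analyses of \cite{PW,WY}, the new feature being the couplings $\beta_{12},\beta_{13},\beta_{23}$.
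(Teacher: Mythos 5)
Your proposal is correct and follows essentially the same route as the paper: the same decomposition into single-bump energy, potential corrections, nearest-neighbour lattice interactions and cross-lattice terms, with the mixed Dirichlet/mass terms eliminated via the equations satisfied by $(U,V)$ and $W$, the interaction integrals evaluated from the decay \eqref{wy1} together with the summation estimates of \cite{D-W-Y} (the paper invokes Lemma 2.3 of \cite{LNW} for the same asymptotics), and the $S$--$T$ cross terms shown to be $o\big(e^{-|\rho-re^{i\pi/\ell}|}\ell^2/r\big)$ because the profiles there appear squared. Your explicit formulas for $A_2, A_3, A_4$ and the algebraic check that $\beta_{12}A_2+A_3>0$ on $\Lambda$ are a small bonus fully consistent with what the paper asserts.
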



\begin{proof}
In fact, there holds
\begin{align*}
& \quad I(U_r,V_r,W_\rho)\\
&=
\ell\Bigl(\frac{\|U\|^2_{H^1(\R^3)} + \|V\|^2_{H^1(\R^3)}}{2}
-\frac{1}{4}\ds\int_{\R^3}(\mu_1 U^4+\mu_2V^4)dx -\frac{1}{2}\ds\int_{\R^3}\beta_{12}U^2V^2dx + \frac{\mu_3}{4}\int_{\R^3}W^4dx\Bigr)\\
& \quad +\frac{1}{2}\int_{\R^3}(P_1 -1)U_r^2dx+\frac{1}{2}\int_{\R^3}(P_2-1)V_r^2dx
+\frac{1}{2}\int_{\R^3}(P_3 -1)W_\rho^2dx\\
&\quad -
\frac{\beta_{12}}{2}\int_{\R^3}\Big(U_r^2V_r^2-\sum\limits_{k=1}^\ell U_{S^k}^2V_{S^k}^2
-\sum\limits_{k\neq i}V_{S^k}^2U_{S^k}U_{S^i}-\sum\limits_{k\neq i}V_{S^k}U_{S^k}^2V_{S^i}\Big)dx
\\
& \quad -
\frac{1}{2}\int_{\R^3}\Big(\beta_{13}U_r^2W_\rho^2dx+\beta_{23}V_r^2W_\rho^2\Big)dx\\
& \quad -
\frac{\mu_1}{4}\int_{\R^3}\Big(U_r^4-\sum\limits_{k=1}^\ell U^4_{S^k} - 2\sum\limits_{k\neq i}U_{S^k}^3U_{S^i}\Big)dx -
\frac{\mu_2}{4}\int_{\R^3}\Big(V_r^4-\sum\limits_{k=1}^\ell V_{S^k}^4-2\sum\limits_{j\neq i}V_{S^k}^3V_{S^i}\Big)dx\\
& \quad -
\frac{\mu_3}{4}\int_{\R^3}\Big(W_\rho^4-\sum\limits_{k=1}^\ell {W_{T^k}}^4-2\sum\limits_{j\neq i}W_{T^k}^3W_{T^i}\Big)dx
\\
&=: \ell I_1+I_2+I_3+I_4+I_5.
\end{align*}
Here we used
$$U_r^2 = \sum_k U_{S^k}^2 + \sum_{k\ne i}U_{S^k}U_{S^i}$$ and similar rewriting for $V_r^2$. On the other hand, we expressed the integrals of mixed products by using the equations of $U_{S^k}$ or $V_{S_k}$ from \eqref{eqs1.3}. Now we will develop more precisely all the terms $I_j$.
\begin{align*}
I_1
& =\frac{\|U\|^2_{H^1(\R^3)} + \|V\|^2_{H^1(\R^3)}}{2}
-\frac{1}{4}\ds\int_{\R^3}(\mu_1 U^4+\mu_2V^4)dx -\frac{1}{2}\ds\int_{\R^3}\beta_{12}U^2V^2dx + \frac{\mu_3}{4}\int_{\R^3}W^4dx\\
& = \left[\frac{1}{4\mu_3^2} +\frac{\mu_1+\mu_2-2\beta_{12}}{4(\mu_1\mu_2-\beta_{12}^2)}\right] \int_{\R^3}{W^{*}}^4dx
\end{align*}
By symmetry and the exponential decay of $W^*$, we obtain
\begin{align*}
\int_{\R^3}(P_1-1)U_r^2dx
= \ell\int_{\Omega_1}(P_1 -1) U_r^2dx & =
\ell\int_{\Omega_1}(P_1-1)\Bigl[U_{S^1}^2+2U_{S^1}\sum\limits_{i=2}^\ell U_{S^i}+\Big(\sum\limits_{i=2}^\ell U_{S^i}\Big)^2\Big]dx
\\
&=
\ell\Bigl[\frac{a_1\alpha^2}{r^{m_1}}\int_{\R^3}{W^*}^2dx+O\Bigl(\frac{1}{r^{m_1}\ell^\sigma}\Bigr)\Bigr]
\end{align*}
where $\sigma>0$ is the constant in assumption $(H_m)$ or $(\widetilde H_m)$, which can be as small as we want. Similarly,
$$
\int_{\R^3}(P_2 -1)V_r^2dx=\ell\Bigl[\frac{a_2\gamma^2}{r^{m_2}}\int_{\R^3}{W^*}^2dx+O\Big(\frac{1}{r^{m_2}\ell^\sigma}\Big)\Big],
$$
and
$$
\int_{\R^3}(P_3 -1)W_\rho^2dx=\ell\Bigl[\frac{a_3}{\mu_3\rho^{m_3}}\int_{\R^3}{W^*}^2dx+O\Big(\frac{1}{\rho^{m_3}\ell^\sigma}\Big)\Big].
$$
Then
\begin{align*}
I_2=\frac{\ell}{2}\Bigl(\frac{a_1\alpha^2}{r^{m_1}} +\frac{a_2\gamma^2}{r^{m_2}} +\frac{a_3}{\mu_3\rho^{m_3}}\Big) \int_{\R^3}{W^*}^2dx + O\Big(\frac{1}{\rho^{m_3}\ell^\sigma}\Big) + O\Big(\frac{1}{r^{\min(m_1, m_2)}\ell^\sigma}\Big).
\end{align*}
By the same idea, we can claim
\begin{align*}
I_3 & = -
\frac{\beta_{12}}{2}\int_{\R^3}\Big(U_r^2V_r^2-\sum\limits_{k=1}^\ell U_{S^k}^2V_{S^k}^2
-\sum\limits_{k\neq i}V_{S^k}^2U_{S^k}U_{S^i}-\sum\limits_{k\neq i}V_{S^k}U_{S^k}^2V_{S^i}\Big)dx\\
& = -
\frac{\beta_{12}\ell}{2}\int_{\Omega_1}\Big(U_r^2V_r^2-\sum\limits_{k=1}^\ell U_{S^k}^2V_{S^k}^2
-\sum\limits_{k\neq i}V_{S^k}^2U_{S^k}U_{S^i}-\sum\limits_{k\neq i}V_{S^k}U_{S^k}^2V_{S^i}\Big)dx\\
& = -\frac{\beta_{12}\ell}{2}\int_{\Omega_1} \Big(U_{S^1}^2V_{S^1}\sum\limits_{i=2}^\ell V_{S^i}+V_{S^1}^2U_{S^1}\sum\limits_{i=2}^\ell U_{S^i}\Big) + O\Big(\ell^2 e^{-\frac{3\pi r}{\ell}}\Big).
\end{align*}
Since
\begin{eqnarray*}
\int_{\R^3}U_r^2W_\rho^2dx
=
\ell\int_{\Omega_1}U_r^2W_\rho^2dx =  o\Big( \frac{\ell^2}{r}e^{-2|\rho-re^{\frac{i\pi}{\ell}}|}\Big) ,
\end{eqnarray*}
there holds
\begin{align*}
I_4= -\frac{1}{2}\int_{\R^3}\Big(\beta_{13}U_r^2W_\rho^2dx+\beta_{23}V_r^2W_\rho^2\Big)dx =(|\beta_{13}| +|\beta_{23}|)\times o\Big( \frac{\ell^2}{r}e^{-2|\rho-re^{\frac{i\pi}{\ell}}|}\Big).
\end{align*}
At last, we have
\begin{align*}
I_5=-\frac{\mu_1}{4}J_1-\frac{\mu_2}{4}J_2-\frac{\mu_3}{4}J_3,
\end{align*}
where
\begin{align*}
J_1 =\int_{\R^3}\Big(U_r^4-\sum\limits_{k=1}^\ell U^4_{S^k} - 2\sum\limits_{k\neq i}U_{S^k}^3U_{S^i}\Big)dx & = \ell \int_{\Omega_1}\Big(U_r^4-\sum\limits_{k=1}^\ell U^4_{S^k} - 2\sum\limits_{k\neq i}U_{S^k}^3U_{S^i}\Big)dx\\
&=
 2\ell\int_{\Omega_1}U_{S^1}^3\sum\limits_{i = 2}^\ell U_{S^i}dx+O\Bigl(\ell e^{-\frac{3\pi r}{\ell}}\Bigr).
\end{align*}
Using Lemma 2.3 in \cite{LNW}, we can claim that there exists a constant $\xi_1 > 0$ depending on $\alpha$, $W^*$ such that as $\ell$ tends to infinity,
\begin{align*}
\int_{\Omega_1}U_{S^1}^3\sum\limits_{i = 2}^\ell U_{S^i}dx = \xi_1\frac{\ell}{r}e^{-\frac{2\pi r}{\ell}} + O\Bigl(\ell e^{-\frac{3\pi r}{\ell}}\Bigr).
\end{align*}
Analogously, there is $\xi_2 > 0$ depending on $\gamma$, $W^*$ such that
\begin{align*}
J_2=\int_{\R^3} \Big(V_r^4-\sum\limits_{k=1}^\ell V_{S^k}^4-2\sum\limits_{j\neq i}V_{S^k}^3V_{S^i}\Big)dx & =2\ell \int_{\Omega_1}V_{S^1}^3\sum\limits_{i=2}^\ell V_{S^i}dx+O\Bigl(\ell e^{-\frac{3\pi r}{\ell}}\Bigr)\\
& = 2\xi_2\frac{\ell^2}{r}e^{-\frac{2\pi r}{\ell}} + O\Bigl(\ell e^{-\frac{3\pi r}{\ell}}\Bigr)
\end{align*}
and we have $\xi_3 > 0$ depending on $\mu_3$, $W^*$ satisfying
\begin{align*}
J_3 =\int_{\R^3}\Big(W_\rho^4-\sum\limits_{k=1}^\ell {W_{T^k}}^4-2\sum\limits_{j\neq i}W_{T^k}^3W_{T^i}\Big)dx & = 2\ell \int_{\widetilde{\Omega}_1}W_{T^1}^3\sum\limits_{i=2}^\ell W_{T^i}dx+O\Big(\ell e^{-\frac{3\pi \rho}{\ell}}\Big)\\
& = 2\xi_3\frac{\ell^2}{\rho}e^{-\frac{2\pi \rho}{\ell}} + O\Big(\ell e^{-\frac{3\pi \rho}{\ell}}\Big).
\end{align*}
Clearly, similar estimate holds true for $I_3$ since $V_{S^1}= \gamma U_{S^1}/\alpha$. Finally, we arrive at
\begin{align*}
I_3+I_5 & =I_3-\frac{\mu_1}{4}J_1-\frac{\mu_2}{4}J_2-\frac{\mu_3}{4}J_3\\
& =-(\beta_{12}A_2+ A_3)\frac{\ell^2}{r}e^{-\frac{2\pi r}{\ell}}-A_4\frac{\ell^2}{\rho}e^{-\frac{2\pi\rho}{\ell}}+O\Big(\ell e^{-\frac{3\pi r}{\ell}}\Big) + O\Big(\ell e^{-\frac{3\pi \rho}{\ell}}\Big)
\end{align*}
where $A_j$ are positive constants depending only on $\mu_i$, $\alpha$, $\gamma$ and $W^*$.

\medskip
Combining all the estimates for $I_j$ ($1 \leq j \leq 5$), we complete readily the proof.
\end{proof}

\section{The finite-dimensional reduction}
In this section, we intend to prove main theorems by the Lyapunov-Schmidt reduction method. Recall that $M > 0$ is the constant in the definition of $D_\ell$ to be chosen later, see \eqref{defr}.

\begin{proposition} \label{pro2.6}
Given $M > 0$, there is an integer $\ell_0>0$ such that for each $\ell\geq \ell_0$, there is a $C^1$ map $\Gamma$ from $D_\ell$ to $E$ with $r=|S^1|,\rho=|T^1|$, and $(\varphi,\psi,\xi) = \Gamma(r, \rho) \in E$ satisfies
$$
\langle dJ(\varphi,\psi,\xi), (\varphi',\psi',\xi')\rangle_H = 0, \quad \forall\; (r, \rho) \in D_\ell, \; (\varphi',\psi',\xi') \in E.
$$
Moreover, there is a positive constant $M_0 > 0$ such that
$$
\|(\varphi,\psi,\xi)\|\leq  M_0\Bigl(\frac{\ell}{r^{\min(m_1, m_2)}}+\frac{\ell}{\rho^{m_3}}+(|\beta_{13}|+|\beta_{23}|)e^{-|\rho-re^{\frac{i\pi}{\ell}}|}\frac{\ell\sqrt\ell}{r}\Bigr)
$$
\end{proposition}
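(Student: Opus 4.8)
The plan is to apply a standard contraction-mapping argument in the space $E$, exploiting the coercivity of $L_2$ from Lemma \ref{lm2.2} together with the estimates on $L_1$ and $R$ from Lemmas \ref{lm2.5} and \ref{lm2.4}. Recall the decomposition $J(\varphi,\psi,\xi) = J(0,0,0) + L_1(\varphi,\psi,\xi) + \tfrac12 L_2(\varphi,\psi,\xi) + R(\varphi,\psi,\xi)$. Finding a critical point of $J$ restricted to $E$ is equivalent to solving $L_1|_E + \mathcal L(\varphi,\psi,\xi) + P_E R'(\varphi,\psi,\xi) = 0$, where $\mathcal L$ is the endomorphism of $E$ associated to $L_2$ and $P_E$ is the orthogonal projection of $H$ onto $E$. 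Since Lemma \ref{lm2.2} gives that $\mathcal L$ is invertible with $\|\mathcal L^{-1}\| \leq \delta^{-1}$ uniformly in $\ell$ large and $(r,\rho)\in D_\ell$, we may rewrite the equation as the fixed point problem
$$
(\varphi,\psi,\xi) = \mathcal A(\varphi,\psi,\xi) := -\mathcal L^{-1}\bigl(\ell_1 + P_E R'(\varphi,\psi,\xi)\bigr),
$$
where $\ell_1 \in E$ is the Riesz representative of $L_1|_E$, so $\|\ell_1\| = \|L_1|_E\| \leq \|L_1\|$.

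First I would set
$$
\varrho_\ell := M_0\Bigl(\frac{\ell}{r^{\min(m_1,m_2)}} + \frac{\ell}{\rho^{m_3}} + (|\beta_{13}|+|\beta_{23}|)e^{-|\rho - re^{i\pi/\ell}|}\frac{\ell\sqrt\ell}{r}\Bigr)
$$
and work on the closed ball $B_\ell := \{(\varphi,\psi,\xi)\in E : \|(\varphi,\psi,\xi)\| \leq \varrho_\ell\}$. Note that for $(r,\rho)\in D_\ell$ with $r,\rho \sim \ell\ln\ell$ and $\min(m_1,m_2), m_3 > 1$, we have $\varrho_\ell \to 0$ as $\ell \to \infty$, so in particular $\varrho_\ell \leq 1$ eventually and Lemmas \ref{lm2.4}, \ref{lm2.5} apply. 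Using $\|L_1\| \leq C\varrho_\ell/M_0$ (from Lemma \ref{lm2.5}, absorbing the constant) and $\|R'(\varphi,\psi,\xi)\| \leq C\|(\varphi,\psi,\xi)\|^2 \leq C\varrho_\ell^2$ on $B_\ell$, I get
$$
\|\mathcal A(\varphi,\psi,\xi)\| \leq \delta^{-1}\bigl(C M_0^{-1}\varrho_\ell + C\varrho_\ell^2\bigr) \leq \varrho_\ell
$$
once $M_0$ is chosen large enough (so that $\delta^{-1}C M_0^{-1} \leq \tfrac12$) and $\ell$ large enough (so that $\delta^{-1}C\varrho_\ell \leq \tfrac12$); hence $\mathcal A$ maps $B_\ell$ into itself. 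For the contraction property, by the mean value inequality and Lemma \ref{lm2.4} with $i = 2$, $\|R'(\chi_1) - R'(\chi_2)\| \leq \sup\|R''\|\cdot\|\chi_1 - \chi_2\| \leq C\varrho_\ell\|\chi_1 - \chi_2\|$ on $B_\ell$, so $\|\mathcal A(\chi_1) - \mathcal A(\chi_2)\| \leq \delta^{-1}C\varrho_\ell\|\chi_1 - \chi_2\| \leq \tfrac12\|\chi_1 - \chi_2\|$ for $\ell$ large. The Banach fixed point theorem then yields a unique solution $(\varphi,\psi,\xi) = \Gamma(r,\rho) \in B_\ell$, which gives the asserted norm bound. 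The identity $\langle dJ(\varphi,\psi,\xi),(\varphi',\psi',\xi')\rangle_H = 0$ for all $(\varphi',\psi',\xi')\in E$ follows by unwinding the definitions: the fixed point equation says exactly that the $E$-component of $dJ$ vanishes.

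The remaining point is the $C^1$ regularity of $\Gamma$ in $(r,\rho)$, which I would obtain from the implicit function theorem applied to the map $G(r,\rho,\varphi,\psi,\xi) := \ell_1(r,\rho) + \mathcal L_{r,\rho}(\varphi,\psi,\xi) + P_E R'_{r,\rho}(\varphi,\psi,\xi)$ from $D_\ell\times E$ to $E$; the partial differential $\partial_{(\varphi,\psi,\xi)}G$ at a solution equals $\mathcal L_{r,\rho} + P_E R''_{r,\rho}$, which is invertible since $\|P_E R''\| \leq C\varrho_\ell < \delta$ for $\ell$ large, a small perturbation of the uniformly invertible $\mathcal L_{r,\rho}$. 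The smooth dependence of $U_r, V_r, W_\rho$ (hence of the coefficients of $L_1, L_2, R$, and of the space $E$ through its defining orthogonality relations) on $(r,\rho)$ gives that $G$ is $C^1$, so $\Gamma$ is $C^1$. I expect the main technical nuisance — rather than a genuine obstacle — to be bookkeeping the $(r,\rho)$-dependence of the projection $P_E$ (equivalently, of the Lagrange multipliers enforcing the orthogonality conditions in \eqref{spaceE}), and checking that this dependence is harmless; the analytic heart of the argument, namely the uniform coercivity, has already been secured in Lemma \ref{lm2.2}.
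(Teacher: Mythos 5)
Your proposal is correct and follows essentially the same route as the paper: rewrite the critical point equation on $E$ as the fixed point problem $(\varphi,\psi,\xi)=-\L^{-1}L_1-\L^{-1}R'(\varphi,\psi,\xi)$, use the uniform invertibility of $\L$ from Lemma \ref{lm2.2} together with Lemmas \ref{lm2.4} and \ref{lm2.5} to show this map is a contraction of the ball of radius $M_0\e_0$, and conclude by Banach's fixed point theorem. The only differences are cosmetic refinements on your side (explicit projection $P_E$ and Riesz representative, and the implicit function theorem argument for the $C^1$ dependence of $\Gamma$ on $(r,\rho)$, which the paper asserts without detail), so no gap to report.
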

\begin{proof}
Recall that $L_1$ is a bounded linear functional over $H$, ${\mathcal L}$ denotes the linear endomorphism of $E$ associated to $L_2$. Therefore, to find a critical point for $J(\varphi,\psi,\xi) $ over $E$ is equivalent to solve in $E$ the following equation:
\begin{eqnarray}\label{eqs3.11}
L_1+ \L(\varphi,\psi,\xi) +R' (\varphi,\psi,\xi)=0.
\end{eqnarray}
Since $\L$ is invertible over $E$ by Lemma \ref{lm2.2}, \eqref{eqs3.11} can be written as
$$
(\varphi,\psi,\xi)=\A(\varphi,\psi,\xi)=-\L^{-1}L_1- \L^{-1}R' (\varphi,\psi,\xi).
$$
Set
\begin{align*}
\Sigma = \Bigl\{ (\varphi,\psi,\xi)\in E,  \|(\varphi,\psi,\xi)\|\leq M_0\e_0\Bigr\}
\end{align*}
where
$$\e_0 := \frac{\ell}{r^{\min(m_1, m_2)}}+\frac{\ell}{\rho^{m_3}}+(|\beta_{13}|+|\beta_{23}|) e^{-|\rho-re^{\frac{i\pi}{\ell}}|}\frac{\ell\sqrt\ell}{r}$$
and $M_0$ is a large enough positive constant to be chosen.

\medskip
Notice that $\e_0 \to 0$ when $\ell \to \infty$. From Lemma \ref{lm2.4} and  Lemma \ref{lm2.5}, for any $(\varphi, \psi, \xi) \in \Sigma$, there holds
\begin{align}\label{eqs3.12}
\begin{split}
\|\A(\varphi,\psi,\xi)\|
\leq C\|L_1\|+C\|(\varphi,\psi,\xi)\|^2 & \leq C(\e_0 + M_0^2\e_0^2) \\
& = C(1+ M_0^2\e_0)\e_0 \\
& \leq M_0\e_0
\end{split}
\end{align}
if $M_0 > C$ and $\ell$ is large enough. Moreover, given any $(\varphi_1,\psi_1,\xi_1), (\varphi_2,\psi_2,\xi_2) \in \Sigma$,
\begin{align}\label{eqs3.13}
\begin{split}
\|\A(\varphi_1,\psi_1,\xi_1)- \A(\varphi_2,\psi_2,\xi_2)\|
& =\|\L^{-1}R' (\varphi_1,\psi_1,\xi_1)- \L^{-1}R' (\varphi_2,\psi_2,\xi_2)\|\\ & \leq
CM_0\e_0\|(\varphi_1,\psi_1,\xi_1)-(\varphi_2,\psi_2,\xi_2)\|.
\end{split}
\end{align}
Therefore, $\A$ is a contraction from $\Sigma$ into itself for $\ell$ large. By Banach's fixed point theorem, there is $(\varphi,\psi,\xi)\in \Sigma$ such that $\A(\varphi,\psi,\xi)=(\varphi,\psi,\xi)$, hence \eqref{eqs3.11}. The proof is completed.
\end{proof}

\medskip
Now we are ready to prove the existence of simultaneous synchronized and segregated solutions to system \eqref{eq1}.
\subsection{Proof of Theorem \ref{th1.1}}
Let $ (\varphi_{r, \rho},\psi_{r, \rho},\xi_{r, \rho})= \Gamma(r,\rho)$ be the mapping obtained in Proposition \ref{pro2.6}. Define
$$
F(r, \rho)= J\circ \Gamma(r, \rho) = I(U_r+\varphi_{r, \rho},V_r+\psi_{r, \rho},W_\rho+\xi_{r, \rho}), \quad \forall\, (r,\rho)\in D_\ell.
$$
With similar arguments used in Proposition 3 of \cite{O} (see also \cite{CNY}), we can check that for $\ell$ sufficiently large, if $(r,\rho) \in D_\ell$ is a critical point of $F$, then $(U_r,V_r,W_\rho) + \Gamma(r,\rho) \in H$ is a critical point of $I$.

\medskip
Consider first the case $(ii)$ of $(H_m)$. For simplicity, denote $m := m_3=\min\{m_1,m_2\}$. Applying Lemma \ref{lm2.4}, Lemma \ref{lm2.1} and  Proposition \ref{A.1}, for $\ell$ large enough and any $(r, \rho) \in D_\ell$, we have
\begin{align*}
F(r,\rho)
&= I(U_r,V_r,W_\rho)+L_1(\Gamma(r,\rho))+\frac{L_2(\Gamma(r,\rho))}{2}
+R(\Gamma(r,\rho))
\\
&=
I(U_r,V_r,W_\rho)+O\Bigl(\|L_1\|\|\Gamma(r,\rho)\|+\|\Gamma(r,\rho)\|^2\Bigr)
\\
&=
I(U_r,V_r,W_\rho)+(|\beta_{13}|+\beta_{23}|)^2\times O\Big(e^{-2|\rho-re^{\frac{i\pi}{\ell}}|}\frac{\ell^3}{r^2}\Big)
+O\Bigl(\frac{\ell^2}{r^{2m}}\Bigr)
\\
&=
\ell A_0+\ell\Bigl(\frac{a_1\alpha^2}{r^{m_1}}+\frac{a_2\gamma^2}{r^{m_2}}+\frac{a_3}{\mu_3 \rho^{m}}\Bigr)A_1-(\beta_{12}A_2+A_3)e^{-\frac{2\pi r}{\ell}}\frac{\ell^2}{r}- A_4 e^{-\frac{2\pi \rho}{\ell}}\frac{\ell^2}{\rho}
\\
&\quad
- (|\beta_{13}| + |\beta_{23}|)\times o\Big(e^{-|\rho-re^{\frac{i\pi}{\ell}}|}\frac{\ell^2}{r}\Big) +O\Bigl(\frac{\ell}{r^{m+\sigma}}+\frac{\ell}{\rho^{m+\sigma}}\Bigr) + O\Bigl(\frac{\ell^2}{r^{2m}}\Bigr)\cr
&
\quad +(|\beta_{13}|+\beta_{23}|)^2\times O\Big(e^{-2|\rho-re^{\frac{i\pi}{\ell}}|}\frac{\ell^3}{r^2}\Big)
 + O\Big(\ell e^{-\frac{3\pi r}{\ell}}\Big)
\cr
&=
\ell A_0+\ell\Bigl(\frac{a_1\alpha^2+a_2\gamma^2}{r^{m}}\Bigr)A_1+\ell\frac{a_3}{\mu_3 \rho^{m}}A_1- e^{-\frac{2\pi r}{\ell}}\frac{\ell^2}{r}(\beta_{12}A_2+A_3) - e^{-\frac{2\pi \rho}{\ell}}\frac{\ell^2}{\rho}A_4
\cr
&\quad  -
K(\beta)\frac{\ell^2}{r}e^{-2|\rho-re^{\frac{i\pi}{\ell}}|}+O\Bigl(\frac{\ell}{ r^{m+\sigma}}\Bigr) + O\Big(\ell e^{-\frac{3\pi r}{\ell}}\Big),
\end{align*}
where
$$K(\beta) := o\big(|\beta_{13}|+|\beta_{23}|\big) + O\Big(\frac{(|\beta_{13}|+|\beta_{23}|)^2}{\ln \ell}\Big).$$

\medskip
We claim that the maximal value of $F$ over $D_\ell$ is reached by some interior point of $D_\ell$. The key argument is the following:
\begin{lem}
\label{newlem1}
Under the assumption $(H_m)$, there exist $C_0, M_1 > 0$ depending on $a_j, A_j, m$ such that if $M \geq M_1$, then for $\ell$ large enough.
\begin{align}\label{4}
\max\limits_{(r,\rho)\in D_\ell}F(r,\rho) \geq \ell\Big(A_0+\frac{C_0 + o(1)}{(\ell\ln\ell)^m}\Big).
\end{align}
\end{lem}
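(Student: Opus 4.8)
The plan is to get the lower bound by evaluating $F$ at a single, carefully placed \emph{interior} point of $D_\ell$; since $\max_{D_\ell}F\ge F(r^*,\rho^*)$ for any $(r^*,\rho^*)\in D_\ell$, it suffices to estimate $F$ at one such point. Write $m:=m_3=\min\{m_1,m_2\}$, fix $c:=\frac{m}{\pi}$ and set $M_1:=\frac{m}{\pi}$, so that for every $M\ge M_1$ the point $(r^*,\rho^*):=(c\,\ell\ln\ell,\,c\,\ell\ln\ell)$ belongs to $D_\ell$ for $\ell$ large: it sits strictly above the lower endpoint $(\frac{m_3}{2\pi}-\delta)\ell\ln\ell$ and at or below $M\ell\ln\ell$. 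The reason for taking $c$ \emph{strictly} larger than $\frac{m}{2\pi}$ — rather than close to the lower edge of $D_\ell$, where $e^{-2\pi r/\ell}\frac{\ell^2}{r}$ would dominate and carry the wrong sign — is that at $(r^*,\rho^*)$ every exponentially small contribution has size $\ell^{1-2\pi c+o(1)}=\ell^{1-2m+o(1)}$, which is negligible against the target scale $\frac{\ell}{(\ell\ln\ell)^m}=\ell^{1-m}(\ln\ell)^{-m}$, because $2\pi c=2m>m$ and $(\ln\ell)^m=o(\ell^{2\pi c-m})$.

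First I would read off $F(r,\rho)$ from the energy expansion established just above, itself a consequence of Proposition~\ref{A.1} and the bound $\|\Gamma(r,\rho)\|\le M_0\e_0$ of Proposition~\ref{pro2.6} (so the remainder $O(\|L_1\|\,\|\Gamma\|+\|\Gamma\|^2)$ equals $O(\e_0^2)=O(\ell^2r^{-2m}+\ell^2\rho^{-2m}+\cdots)$). Under case $(ii)$ of $(H_m)$ this is exactly the displayed expansion, with leading positive coefficient $B_1:=a_1\alpha^2+a_2\gamma^2>0$. Under case $(i)$ the same computation gives the analogous formula, in which one keeps $\ell\,a_1\alpha^2 A_1 r^{-m_1}$ (when $m_1<m_2$, so $m_1=m$), resp.\ $\ell\,a_2\gamma^2 A_1 r^{-m_2}$ (when $m_2<m_1$), as the leading $r$-contribution — hence $B_1:=a_1\alpha^2>0$, resp.\ $a_2\gamma^2>0$ — while the other term is $O(\ell\,r^{-m'})$ with $m':=\max(m_1,m_2)>m$ and goes into the error. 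In every case one reaches
\begin{align*}
F(r,\rho)=\ell A_0+\frac{\ell B_1 A_1}{r^m}+\frac{\ell a_3 A_1}{\mu_3\rho^m}+\mathrm{Err}(r,\rho),\qquad B_1>0,\ a_3>0,
\end{align*}
where $\mathrm{Err}$ gathers the exponentially small terms $e^{-2\pi r/\ell}\frac{\ell^2}{r}$, $e^{-2\pi\rho/\ell}\frac{\ell^2}{\rho}$, $\ell\,e^{-3\pi r/\ell}$ (with bounded coefficients, regardless of the sign of $\beta_{12}A_2+A_3$), the term $K(\beta)\frac{\ell^2}{r}e^{-2|\rho-re^{i\pi/\ell}|}$ with $K(\beta)$ bounded, the algebraic error $O(\ell^{1-\sigma}r^{-m})$, the term $O(\e_0^2)$, and, in case $(i)$, the lower-order term $O(\ell\,r^{-m'})$.

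Next I would substitute $(r^*,\rho^*)=(c\,\ell\ln\ell,\,c\,\ell\ln\ell)$. The two leading terms give
\begin{align*}
\frac{\ell B_1 A_1}{(c\,\ell\ln\ell)^m}+\frac{\ell a_3 A_1}{\mu_3(c\,\ell\ln\ell)^m}=C_0\,\frac{\ell}{(\ell\ln\ell)^m},\qquad C_0:=\Big(B_1+\frac{a_3}{\mu_3}\Big)\frac{A_1}{c^m}>0,
\end{align*}
a fixed positive constant depending only on $a_j$, $A_1$, $\mu_3$, $\alpha$, $\gamma$ and $m$. For $\mathrm{Err}(r^*,\rho^*)$: since $e^{-2\pi r^*/\ell}=e^{-2\pi\rho^*/\ell}=\ell^{-2m}$, $e^{-3\pi r^*/\ell}=\ell^{-3m}$ and $|\rho^*-r^*e^{i\pi/\ell}|=2r^*\sin\frac{\pi}{2\ell}=m\ln\ell\,(1+o(1))$, every exponential term is $O(\ell^{1-2m}(\ln\ell)^{-1})$ or smaller, hence $o(\ell^{1-m}(\ln\ell)^{-m})=o\big(\frac{\ell}{(\ell\ln\ell)^m}\big)$; and $O(\ell^{1-\sigma}(r^*)^{-m})=\ell^{-\sigma}O\big(\frac{\ell}{(\ell\ln\ell)^m}\big)$, $O(\e_0^2)=O\big((\frac{\ell}{(\ell\ln\ell)^m})^2\big)$, $O(\ell\,(r^*)^{-m'})=(\ell\ln\ell)^{-(m'-m)}O\big(\frac{\ell}{(\ell\ln\ell)^m}\big)$ are all $o\big(\frac{\ell}{(\ell\ln\ell)^m}\big)$ because $\sigma>0$, $\frac{\ell}{(\ell\ln\ell)^m}\to0$ and $m'>m$. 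Thus $\mathrm{Err}(r^*,\rho^*)=o\big(\frac{\ell}{(\ell\ln\ell)^m}\big)$, and
\begin{align*}
\max_{(r,\rho)\in D_\ell}F(r,\rho)\ge F(r^*,\rho^*)=\ell A_0+(C_0+o(1))\frac{\ell}{(\ell\ln\ell)^m}=\ell\Big(A_0+\frac{C_0+o(1)}{(\ell\ln\ell)^m}\Big),
\end{align*}
which is \eqref{4}.

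The step I expect to be the main obstacle is this last one — the quantitative comparison of $\mathrm{Err}$ with the target scale $\frac{\ell}{(\ell\ln\ell)^m}$. One must not yield to the temptation of pushing $r^*$ toward the lower endpoint of $D_\ell$ in order to make $\ell r^{-m}$ larger, since there $e^{-2\pi r/\ell}\frac{\ell^2}{r}$ has size $\ell^{1-m+2\pi\delta+o(1)}$ with a negative sign and cannot be absorbed; and one must keep track of the spurious $(\ln\ell)^m$ in the denominator of the target, which is exactly what forces the \emph{strict} inequality $2\pi c>m$ (and makes it safe to take, say, $c=\frac{m}{\pi}$). The remainder is routine bookkeeping, which I would check runs uniformly over the two alternatives $(i)$, $(ii)$ of $(H_m)$ and over all admissible $M\ge M_1$.
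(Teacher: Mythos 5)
Your proof is correct as a proof of the statement as written, but it takes a genuinely different route from the paper. The paper does not test $F$ at a crude point: it studies the reduced one--variable profiles $g_1(t)=\frac{(a_1\alpha^2+a_2\gamma^2)A_1}{t^m\ell^m}-\frac{(\beta_{12}A_2+A_3)e^{-2\pi t}}{t}$ and $\widetilde h(\rho)=\frac{a_3A_1}{\mu_3\rho^m}-\frac{A_4\ell}{\rho}e^{-\frac{2\pi\rho}{\ell}}$, locates their local maxima at $r_\ell,\rho_\ell=(\frac{m}{2\pi}+o(1))\ell\ln\ell$ via the critical--point equation (this is where the sign fact $\beta_{12}A_2+A_3>0$, obtained by Cauchy--Schwarz, enters), and evaluates $F(r_\ell,\rho_\ell)$, obtaining \eqref{4} with the specific constant $C_0=C_1+C_2$, where $C_1,C_2$ are (up to $o(1)$) the separate maximal values, essentially $B_1A_1(\frac{2\pi}{m})^m$ and $\frac{a_3A_1}{\mu_3}(\frac{2\pi}{m})^m$. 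You instead evaluate at the single point $r=\rho=\frac{m}{\pi}\ell\ln\ell$, where $2\pi c=2m>m$ makes every exponential term trivially negligible against $\ell^{1-m}(\ln\ell)^{-m}$; your bookkeeping of the error terms (including the $e^{-2|\rho-re^{i\pi/\ell}|}$ cross term in the form the paper actually derives, the $O(\e_0^2)$ remainder, and the lower--order potential term in case $(i)$) is sound, and you correctly get a positive $C_0=(B_1+\frac{a_3}{\mu_3})A_1(\frac{\pi}{m})^m$ with $M_1=\frac m\pi$. What each approach buys: yours is shorter, needs no critical--point analysis and no sign information on $\beta_{12}A_2+A_3$; the paper's yields the near--optimal constant, and this matters downstream. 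Indeed, in the proof of Theorem \ref{th1.1} the lemma is used through the identity $C_0=C_1+C_2$: at the boundary $r=M\ell\ln\ell$ one bounds $F$ by $\ell A_0+\ell\frac{B_1A_1}{(M\ell\ln\ell)^m}+\ell\frac{C_2+o(1)}{(\ell\ln\ell)^m}$ and compares with \eqref{4}, which works precisely because $C_2$ there is the same constant appearing in $C_0$. With your smaller $C_0$ (each term carries the factor $(\frac{\pi}{m})^m=2^{-m}(\frac{2\pi}{m})^m$), that comparison can fail, e.g.\ when $B_1$ is small relative to $\frac{a_3}{\mu_3}$, so the boundary--exclusion step would have to be reworked (or your test point replaced by the paper's near--optimal one). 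This does not affect the validity of your proof of the lemma itself, but it is the reason the paper proves the sharper version.
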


Let
$$
g_1(t)=\frac{(a_1 \alpha^2+a_2\gamma^2)A_1}{t^m\ell^m}-\frac{(\beta_{12}A_2+A_3)e^{-2\pi t}}{t}.
$$
Then
$$
g_1'(t)=-\frac{m(a_1 \alpha^2+a_2\gamma^2)A_1}{t^{m+1}\ell^m}+\frac{2\pi (\beta_{12}A_2+A_3)e^{-2\pi t}}{t}+\frac{(\beta_{12}A_2+A_3)e^{-2\pi t}}{t^2},
$$
By Cauchy-Schwarz inequality,  we see that $\beta_{12}A_2+ A_3>0$ for any $\beta_{12}>-\sqrt{\mu_1\mu_2}$. Then it is easy to check that for $\ell \to \infty$, $g_1$ has a large local maximum point $ t_\ell$ satisfying the equation
\begin{eqnarray}\label{eqs3.14}
\frac{m(a_1 \alpha^2+a_2\gamma^2)A_1}{t^{m+1}\ell^m}=\frac{2\pi B_1e^{-2\pi t}}{t}+\frac{B_1e^{-2\pi t}}{t^2}.
\end{eqnarray}
Thus
$$
t_\ell= \Big(\frac{m}{2\pi}+o(1)\Big)\ln \ell.
$$
Moreover, by \eqref{eqs3.14}, we have
\begin{align*}
g_1(t_{\ell})=\frac{(a_1\alpha^2+a_2\gamma^2)A_1}{t_\ell^m\ell^m}\Big(1+O({t_{\ell}^{-1}})\Big).
\end{align*}
So the function $\widetilde{g}_1 r = g_1(r/\ell)$ has a local maximum point $r_\ell=\ell t_\ell=(\frac{m}{2\pi}+o(1))\ell\ln\ell$
with
$$
\widetilde{g}_1(r_\ell)=\frac{C_1+o(1)}{(\ell\ln\ell)^m}, \quad \mbox{as $\ell \to \infty$}
$$
for some constant $C_1>0$ depending only on $a_1, a_2, \alpha, \gamma, m$.

\medskip
Similarly, as $\ell\to \infty$, the function $
\widetilde{h}(\rho)=\frac{a_3A_1}{\mu_3\rho^m}-\frac{A_4\ell}{\rho}e^{-\frac{2\pi \rho}{\ell}}$
has a local maximum point at
$
\rho_\ell=(\frac{m}{2\pi}+o(1))\ell\ln\ell,
$
where
$$
\widetilde{h}(\rho_\ell)=\frac{C_2+o(1)}{(\ell\ln\ell)^m}
$$
for some constant $C_2 > 0$ depending only on $a_3,\,A_4,\,m$.

\medskip
For $M > M_1 = \frac{m}{\pi}$ and $\ell$ large enough, we see that
\begin{align}\label{eqs3.15}
\begin{split}
\max_{D_\ell} F(r,\rho) \geq F(r_\ell, \rho_\ell) & = \ell A_0+ \ell \widetilde{g}(r_\ell)+ \ell \widetilde{h}(\rho_\ell)\\
& \quad  -
K(\beta)\frac{\ell^2}{r_\ell}e^{-2|\rho_\ell -r_\ell e^{\frac{i\pi}{\ell}}|} + O\Bigl(\frac{\ell}{r_\ell^{m+\sigma}}\Bigr) + O\Big(\ell e^{-\frac{3\pi r_\ell }{\ell}}\Big),\\
& =\ell\Big(A_0+\frac{C_1+C_2 + o(1)}{(\ell\ln\ell)^m} \Big).
\end{split}
\end{align}
Therefore we get \eqref{4} with $C_0 = C_1 + C_2 > 0$. \qed

\medskip
Now let us show that the maximum of $F$ over $D_\ell$ cannot be reached on $\partial D_\ell$. Consider first $r_1=(\frac{m}{2\pi}-\delta)\ell\ln\ell$. As $\beta_{12}A_2+A_3 > 0$, for $\ell$ large enough, we have $$B_1 := \beta_{12}A_2+A_3 + K(\beta) > 0.$$
Take any $(r_1, \rho)\in D_\ell,$ there holds, as $\ell \to \infty$
\begin{align*}
F(r_1,\rho)
&\leq \ell A_0+\frac{C\ell}{(\ell\ln\ell)^m}- \big[B_1 + o(1)\big]\ell\frac{e^{-2\pi(\frac{m}{2\pi}-\delta)\ln \ell}}{(\frac{m}{2\pi}-\delta)\ln \ell}+ \ell\widetilde{h}(\rho) + O\Bigl(\frac{\ell}{(\ell\ln\ell)^{m+\sigma}}\Bigr)\\
&\leq \ell A_0 + O\Big(\frac{\ell}{(\ell\ln\ell)^m}\Big) - \frac{C'\ell}{\ell^{m-2\pi\delta}\ln \ell}
\cr
&< \ell A_0.
\end{align*}
Seeing \eqref{4}, clearly the maximum value cannot be realized with $r= r_1 = (\frac{m}{2\pi}-\delta)\ell\ln\ell$.

\medskip
Take now $(r_2, \rho)\in D_\ell$ with $r_2 := M\ell\ln\ell$. We get, for $\ell\to \infty$
\begin{align*}
F(r_2,\rho)
&\leq \ell A_0+ \ell \frac{(a_1\alpha^2+a_2\gamma^2)A_1}{(M\ell\ln \ell)^m} + \widetilde{h}(\rho_\ell) + O\Bigl(\frac{\ell}{(\ell\ln\ell)^{m+\sigma}}\Bigr)
\cr
& \leq \ell A_0+ \ell \frac{(a_1\alpha^2+a_2\gamma^2)A_1}{(M\ell\ln \ell)^m} + \ell\frac{C_2 + o(1)}{(\ell\ln\ell)^m}.
\end{align*}
Let $M$ be large enough satisfying $(a_1\alpha^2+a_2\gamma^2)A_1 < C_1M^m$. As $C_0 = C_1 + C_2$, the above estimate means that the maximum value of $F$ over $D_\ell$ cannot be realized if $r = M\ell\ln\ell$.

\medskip
By similar consideration, we can check that with suitable choice of $\delta$ and $M$, the maximum of $F$ over $D_\ell$ cannot be reached if $\rho_\ell = (\frac{m}{2\pi}-\delta)\ell\ln\ell$ or $\rho_\ell = M\ell\ln\ell$.

\medskip
Finally, we can fix $\delta > 0$ small enough and $M > 0$ large enough such that for $\ell$ large, there is an interior maximum point for $F$ on $D_\ell$. Hence a critical point of $F$ exists, so a solution to the system exists.

\medskip
For case $(i)$ of $(H_m)$, for example, let $m_1<m_2$ and $\,a_1>0$. As now $\frac{a_1\alpha^2}{r^{m_1}}$ is the main order term for the energy expansion, we use it instead of $\frac{a_1\alpha^2+a_2\gamma^2}{r^m}$, similar lower bound estimate as in Lemma \ref{newlem1} holds true, and the same conclusion can be derived. We omit the details.

\medskip
The last step is prove that $u_\ell=U_r+\varphi_\ell,\,v_\ell=V_r+\psi_\ell,\,w_\ell=W_\rho+\xi_\ell$ are positive. First, the negative part ${(u_\ell)}_ -, {(v_\ell)}_-, {(w_\ell)}_-$ tend to zero in $H$ as $\ell\rightarrow\infty$. By regularity theory, we have, as in \cite{PW}, $\|(\varphi_\ell,\psi_\ell,\xi_\ell)\|_\infty$ tends to zero as $\ell\rightarrow\infty$.
We see from $\langle I'(u_\ell,v_\ell,w_\ell),({(u_\ell)}_-,0,0)\rangle=0$ that
\begin{align*}
\|(u_\ell)_-\|_{P_1}^2\leq o(1)\|(u_\ell)_-\|_{P_1}^2, \quad \mbox{as $\ell \to \infty$}.
\end{align*}
Hence $u_\ell\geq 0$. Similarly, there hold $v_\ell \geq 0, w_\ell \geq 0$. By strong maximum principle, we conclude that $(U_r+\varphi_\ell,V_r+\psi_\ell,W_\rho+\xi_\ell)$ is a positive solution of \eqref{eq1}. \qed

\subsection{Sketch proof of Theorem \ref{th1.4}} Since the approach is very similar to that for Theorem \ref{th1.2}, we omit the details and just explain the main difference.

\medskip
The main difference comes from the energy expansion.
Remark that $$(\overline U_{S_k}, \overline V_{S_k}) := (-1) ^k(U_{S_k}, V_{S_k})$$ are always solutions to \eqref{eqs1.3} and $\overline W_{T^k} := (-1)^kW_{T^k}$ still satisfies the equation
\begin{align*}
-\Delta W + W= \mu_3 W^3.
\end{align*}
For the expansion of $I(\overline{U}_r+\overline{\varphi},\overline{V}_r+\overline{\psi},\overline{W}_\rho+\overline{\xi})$, essentially $I_3$ and $I_5$ will have different form comparing to the proof of Proposition \ref{A.1}. Notice also that the number of peaks is now $2\ell$ instead of $\ell$. For example, here we have
\begin{align*}
I_5 & = -\frac{\mu_3}{4}\int_{\R^3}\bar{W}^4_\rho-\sum\limits_{k=1}^{2\ell}W_{T^k}^4-2\sum\limits_{i\neq k}^{2\ell}(-1)^{k+i}W^3_{T^{k}}W_{T^i}\cr
& = -\frac{\mu_3\ell}{2}\int_{\Omega_1}W_{T^1}(-1)^{k+1}\sum\limits_{k=2}^{2\ell}W_{T^k}
=\frac{\mu_3\ell}{2}\int_{\Omega_1}W_{T^1}(-1)^{k}\sum\limits_{k=2}^{2\ell}W_{T^k}\cr
& = A_4\frac{\ell^2}{\rho}e^{-\frac{2\pi\rho}{\ell}}+O(\ell e^{-\frac{3\pi\rho}{\ell}}).
\end{align*}
The search of a critical point with the form $(\overline{U}_r+\overline{\varphi},\overline{V}_r+\overline{\psi},\overline{W}_\rho+\overline{\xi})$ will be reduced to find a critical point of the following function in the interior of $D_{\ell}$:
\begin{align*}
\bar{F}(r,\rho) & = \ell A_0+\Bigl(\frac{a_1\alpha^2}{r^{m_1}}+\frac{a_2\gamma^2}{r^{m_2}}+\frac{a_3}{\mu_3 \rho^{m_3}}\Bigr)\ell A_1+(\beta_{12}A_2+A_3)e^{-\frac{2\pi r}{\ell}}\frac{\ell^2}{r}\cr
&\quad +A_4e^{-\frac{2\pi \rho}{\ell}}\frac{\ell^2}{\rho}+O\Big(\frac{1}{r^{\min\{m_1,m_2\}+\sigma}}+\frac{1}{\rho^{m_3+\sigma}}\Big)
\end{align*}
where $A_i$ are the constants given in Proposition \ref{A.1}. Remark that the coefficients of main terms change the sign, we will consider the minimum of $\bar F$ over $D_\ell$. \qed

\end{document}